\newtheorem{thm}{Theorem}[section]
\newtheorem{cor}{Corollary}[section]
\DeclareMathOperator{\He}{He}
\newcommand{\beq}{\begin{equation}}
\newcommand{\eeq}{\end{equation}}
\title{Hyperbolic Machine Learning Moment Closures for the BGK Equations \thanks{Submitted to the editors DATE. \funding{This work was funded in part by the US Department of Energy DE-SC0023164, the US National Science Foundation NSF-2309655 and NSF-2008004, and the US Department of Defense Science, Mathematics, and Research for Transformation (SMART) Scholarship, funded by OUSD/R\&E (The Under Secretary of Defense-Research and Engineering), National Defense Education Program (NDEP) / BA-1, Basic Research.}}} %
\author{Andrew J. Christlieb\footnotemark[2] \footnotemark[4]
\and Mingchang Ding\thanks{Department of Computational Mathematics, Science, and Engineering, Michigan State University, East Lansing, MI (\email{dingmin2@msu.edu})}
\and Juntao Huang\thanks{Department of Mathematics and Statistics, Texas Tech University, Lubbock, TX (\email{juntao.huang@ttu.edu})}
\and Nicholas A. Krupansky\thanks{Department of Mathematics, Michigan State University, East Lansing, MI (\email{christli@msu.edu}, \email{krupans4@msu.edu})}}
\begin{document}
\maketitle %
\begin{abstract}
We introduce a hyperbolic closure for the Grad moment expansion of the Bhatnagar-Gross-Krook's (BGK) kinetic model using a neural network (NN) trained on BGK's moment data. This closure is motivated by the exact closure for the free streaming limit that we derived in our paper on closures in transport \cite{Huang2022-RTE1}.  The exact closure relates the gradient of the highest moment to the gradient of four lower moments. As with our past work, the model presented here learns the gradient of the highest moment in terms of the coefficients of gradients for all lower ones.  By necessity, this means that the resulting hyperbolic system is not conservative in the highest moment. For stability, the output layers of the NN are designed to enforce hyperbolicity and Galilean invariance. This ensures the model can be run outside of the training window of the NN. Unlike our previous work on radiation transport that dealt with linear models, the BGK model's nonlinearity demanded advanced training tools. These comprised an optimal learning rate discovery, one cycle training, batch normalization in each neural layer, and the use of the \texttt{AdamW} optimizer.  To address the non-conservative structure of the hyperbolic model, we adopt the FORCE numerical method to achieve robust solutions. This results in a comprehensive computing model combining learned closures with methods for solving hyperbolic models. The proposed model generalizes beyond the training time window for low to moderate Knudsen numbers. Our paper details the multi-scale model construction and is run on a range of test problems.

\end{abstract}

\begin{keywords}
BGK equation, moment closure, machine learning, neural network, hyperbolicity
\end{keywords}

\begin{MSCcodes}
82C40, 82C32, 82C70
\end{MSCcodes}

\section{Introduction}
\label{sec:Introduction}

Kinetic models are important in many of the physical sciences and engineering applications when a physical system is far away from a local equilibrium, such is the case in micro-electromechanical system (MEMS) \cite{Naris2004}, atmospheric re-entry \cite{Tseng2006}, and controlled nuclear fusion \cite{Stangeby1990} to name a few. When describing systems of interest these models are plagued by the so-called ``curse of dimensionality'': in one-spatial dimension, one-velocity dimension, and time evolution we have a problem in $\mathbb{R}^3$, and in three spatial dimensions this associated model is in $\mathbb{R}^7$ phase space for each particle. This leads to the use of distributions which are further complicated by their multi-scale nature, where the behavior of solutions is dependent on the characteristic length of collisions involved. Resolving features in detail on realistic applications can quickly reach resource limits in computation with respect to discretization.

The method of moments is an approach to reduce the dimensionality in kinetic models. The first such technique was by Chapman-Enksog and Hilbert expansion, which suffered from singular moments after the first few. The ``modern'' example is due to Grad \cite{Grad1949}, 
where the distribution function in kinetic equations is expanded by a finite number of moments by integrating over velocity.
Expanding to a fixed number of moments, say the $k$-th moment, we require information of the next highest moment, the $(k+1)$-th moment, which results in an unclosed system. This leads to the so-called moment closure problem which addresses how to handle moments beyond the fixed number desired. However, Grad's moment system and closure suffer from a loss of hyperbolicity. This deficiency drew renewed attention by Levermore \cite{Levermore1996} and has driven contributions to moment closures in kinetic problems too numerous to properly exposition here. We highlight some recent work focused on closures that preserve properties of the moment system, such as hyperbolicity. One such approach is the Hyperbolic Moment Equations (HME) \cite{Cai2013,Cai2015}, where Cai \textit{et al.}\ utilize independence between moment expansion coefficients and the Jacobian's characteristic polynomial from Grad's moment system, leading to a regularization that gives global hyperbolicity. Another is the Quadrature Based Moment Equations (QBME) \cite{Koellermeier2014,Koellermeier2014-AIPConf,Koellermeier2018-QBME2d} where hyperbolicity is preserved through the use of quadrature-based projection methods.

Recent efforts in solving the closure problem employ machine learning (ML) techniques that preserve desirable qualities of the kinetic and transport systems. The pioneering work by Han \textit{et al.}\  \cite{Han-WeinanE-2019} demonstrated the utility of trained neural networks on high-fidelity kinetic data to learn closures over a wide range of Knudsen numbers, with many follow-works that preserve symmetries and invariances. Li \textit{et al.}\  \cite{Li2023} also employed neural networks, specifically the U-net architecture, to train closure models that preserve Galilean, reflecting, and scaling invariance. Charalampopoulos \textit{et al.}\  applied recurrent neural networks to correct the quadrature-based moment methods to improve accuracy in the unclosed moment \cite{Charalampopoulos2022}.

Machine learning methods have also been applied to kinetic models in other ways than the moment closure problem. Miller \textit{et al.}\ applied neural networks to approximate the collision operator of the Boltzmann equation \cite{MILLER2022111541}. Neural networks have also been used to represent the Boltzmann equation, in a sparse manner by Li \textit{el al.}\ \cite{li2023solving}. 

Hyperbolicity-preserving neural networks have been developed and applied to the closure problem. In the Radiative Transfer Equation (RTE), the work of Huang \textit{et al.}\  \cite{Huang2022-RTE1,Huang-RTE2,Huang-RTE3}, combined the structure of the closure system with a neural network, learning coefficients on the gradients of the moments. A key feature of RTE work is that the model is a local closure, while previous works were global. This is useful in large-scale, distributed computing. Schotth{\"o}fer \textit{et al.}\  \cite{pmlr-v162-schotthofer22a,schotthofer2022neural,porteous2021data} use neural network learned closures that embed entropy convexity while preserving minimum entropy and hyperbolicity for the Boltzmann equation.

By using neural networks trained on kinetic model data, there is a potential to achieve closures that are more accurate by incorporating kinetic effects instead of relying on the strong assumption of local equilibrium where one can use periodic, microscopic simulations to derive local equations of state (as in the case of the SESAME database \cite{sesame-report,sesame-McHardy2018}). Our work builds on the RTE neural network closure efforts of Huang \textit{et al.}\  \cite{Huang2022-RTE1, Huang-RTE2, Huang-RTE3} and applies it to the Bhatnagar-Gross-Krook (BGK) model of the Boltzmann kinetic equation. An analytical closure was developed for the free streaming limit in the first effort \cite{Huang2022-RTE1}. This closure relates the gradient of the highest moment to the next four lower moments. The follow-up RTE efforts \cite{Huang-RTE2,Huang-RTE3} develop provably hyperbolic closures relating many gradients of lower moments to the gradient of the highest moment. Our current work follows in that progression and by necessity, while the system is hyperbolic it is not conservative in the highest moment. Hence, for nonlinear problems such as the BGK model, we develop path-conservative numerical methods for solving the fluid ML closure \cite{castro2017well}. {  
In the numerical examples, we test two sets of ML moment closures. 
The first set is to recover the HME model using neural networks. To be more specific, we train the neural network using the data generated from the HME model. The purpose is to validate the correctness of the algorithm and code. The numerical results show good agreement between HME and our trained closures for smooth and discontinuous initial conditions across Knudsen number regimes.
The second set is to train the neural network using the data from the kinetic model. The goal is to capture the kinetic effects in the ML closure.
We have promising results for kinetic trained closure on smooth data up to transitional and free streaming (Knudsen numbers between 0.1 and 10).
}
Our contribution is demonstrating hyperbolicity preserving, local closures from neural networks that have predictive power beyond the training time window across a range Knudsen numbers.

{  Although the current work is limited to one spatial dimension and one velocity dimension (1D1V), our proposed approach to moment closure applies to current problems in inertial confinement fusion which use Vlasov–Fokker–Planck equation in 1D2V \cite{Taitano2018,Taitano2021}. Surrogate models are needed for these Vlasov-Fokker-Planck models in order to perform sensitivity studies ahead high-fidelity computations. Future work could focus on extending our methods beyond 1D1V models.}

The rest of the paper is organized as follows. In \Cref{sect:Moment Closure for the BGK Equation}, we outline the moment expansion and closure system for the BGK equations. Then, in \Cref{sec:Training Setup and Neural Network Architecture}, we outline the neural network structure, training data, and training method. In \Cref{sec:Numerical Method}, we discuss path-conservative solvers used in our calculations with the closures. Finally in \Cref{sec:Numerical Results}, we present results using our trained closures.

\section{Moment Closure for the BGK Equation}
\label{sect:Moment Closure for the BGK Equation}

In this section, we review the moment expansion and closure problem for the Bhatnagar-Gross-Krook (BGK) model and develop the necessary structure for a neural network closure.  

The BGK model \cite{Bhatnagar1954} is a simplification of the Boltzmann kinetic equation which deals specifically with the collision term. The BGK model assumes the distribution, $f$, tends to a local Maxwellian distribution and has several collision invariants which recover conservation laws \cite{Struchtrup2005}. The BGK equation is given by
\begin{equation}
    \label{eq:BGK}
    \frac{\partial f}{\partial t} + v \frac{\partial f}{\partial x} = \frac{1}{\tau} \left( f_M -f\right),
\end{equation}
where the unknown function $f=f(x,v,t)$ is the distribution (or called the phase density), $v$ is the microscopic velocity, $\tau>0$ is a relaxation time related to the Knudsen number, and $f_M$ is the Maxwellian distribution given by
\begin{equation}
    \label{eq:f_M Maxwellian}
    f_M = \frac{\rho}{\sqrt{2\pi\theta}}\exp\left(-\frac{(v-u)^2}{{2\theta}}\right).
\end{equation}
Physical density, temperature, and macroscopic velocity are given by $\rho$, $\theta$, and $u$ respectively \cite{Cai2013}. 

The BGK model has several collision invariant quantities. Let $\mathbf{\Phi}(v) = \left( 1, v, \frac{1}{2}v^2 \right)$, we have the following relationships \cite{Cai2013,Struchtrup2005}:
\begin{equation}
    \label{eq:primitive variable moments}
    \int f\, \mathbf{\Phi}(v) \,dv = 
    \begin{pmatrix}
    \rho \\ \rho u \\ \frac{1}{2}\rho \theta + \frac{1}{2}\rho u^2
    \end{pmatrix}.
\end{equation}
Applying $\mathbf{\Phi}$ to both sides of the \cref{eq:BGK} and integrating over velocity, we recover the conservation laws:
\begin{align}
\label{eq:conservation-mass}
\frac{\partial\rho}{\partial t} + \frac{\partial}{\partial x} \left(\rho u \right) &= 0, \\
\label{eq:conservation-momentum}
\frac{\partial}{\partial t}\left(\rho u\right) +  \frac{\partial}{\partial x} \left(\rho\theta + \rho u^2 \right) &= 0, \\
\label{eq:conservation-energy}
\frac{\partial}{\partial t}\left( \frac{1}{2}\rho\theta +\frac{1}{2}\rho u^2\right) +\frac{\partial}{\partial x}\left( q + \frac{3}{2}\rho u \theta + \frac{1}{2}\rho u^3\right) &=0,
\end{align}
where $q$ is the heat flux.

\subsection{Moment Method}
A common method to solve for the distribution of a kinetic model is the so-called moment method, where the distribution is expanded by a finite number of moments. Due to the finite expansion and relationship of the moments, the system of equations developed by the moment expansion is not closed and the system requires a closure in the highest-order moment.

In the seminal paper \cite{Grad1949}, Grad proposed a moment expansion using Hermite polynomials in the following manner
\begin{equation}
\label{eq:grad_moment_expansion}
f(x,v,t) = \sum^\infty_{k=0} \frac{1}{\sqrt{2\pi}}\theta^{-\frac{k+1}{2}}\He_k(z)\exp{\left(-\frac{z^2}{2}\right)}f_k(x,t),
\end{equation}
where $z := \frac{v-u}{\sqrt{\theta}}$, $\He_k$ is the $k$-th order Hermite polynomial, and $f_k=f_k(x,t)$ is called the $k$-th order moment. It is easy to show that the first four moments in the expansion \eqref{eq:grad_moment_expansion} satisfy $f_0 = \rho$, $f_1 =f_2=0$, and $3f_3=q$. Continuing for $k>3$, the evolution of the moment can be derived using the properties of Hermite polynomials \cite{Grad1949}:
\begin{equation}
\label{eq:grad moment recursion relation}
\begin{split}
&\frac{\partial f_k}{\partial t} - \frac{1}{\rho}f_{k-2}\frac{\partial q}{\partial x} -f_{k-1}\frac{\theta}{\rho} \frac{\partial \rho}{\partial x} + \frac{\partial \theta}{\partial x}\left( \frac{k-1}{2}f_{k-1} +\frac{\theta}{2} f_{k-3} \right) + (k+1)f_k\frac{\partial u}{\partial x} \\
&\quad\quad\quad\quad\quad\quad\quad\quad + u\frac{\partial f_k}{\partial x} + \theta \frac{\partial f_{k-1}}{\partial x} + (k+1)\frac{\partial f_{k+1}}{\partial x}=-\frac{1}{\tau} f_k.
\end{split}
\end{equation}
This relation reveals the closure problem since the time evolution of $f_k$ has a dependence on the next order moment $f_{k+1}$.

Truncating the moment hierarchy up to order $M$ and setting $f_{M+1} = 0$ yields the so-called Grad's moment closure system.
Formulating the moment system from \cref{eq:conservation-energy,eq:conservation-mass,eq:conservation-momentum,eq:grad moment recursion relation} in primitive variables $\mathbf{w} = \left(\rho,u,\theta,f_3,f_4,...,f_M\right)^T$, we arrive at the Grad's moment system in non-conservative form \cite{Grad1949,Cai2013}
\begin{equation}
    \label{eq:primitive moment system matrix vector}
    \frac{\partial{\mathbf{w}}}{\partial t} + A^{\text{Grad}}\frac{\partial{\mathbf{w}}}{\partial x} = \mathbf{Q},
\end{equation}
where $A^{\text{Grad}}$ is a  matrix of the form
 \begin{equation}
    \small
    \label{eq:Grad's moment matrix}
    \begin{pmatrix}
        u                           & \rho     & 0                                            & 0                      & 0      & 0      & 0      && \cdots & 0 \\
        \frac{\theta}{\rho}         & u        & 1                                            & 0                      & 0      & 0      & 0      && \cdots & 0 \\
        0                           & 2\theta  & u                                            & \frac{6}{\rho}         & 0      & 0      & 0      && \cdots & 0 \\
        0                           & 4f_3     & \frac{1}{2}\rho\theta                        & u                      & 4      & 0      & 0      && \cdots & 0 \\
        -\frac{\theta}{\rho}f_3     & 5f_4     & \frac{3}{2}f_3                               & \theta                 & u      & 5      & 0      && \cdots & 0 \\
        -\frac{\theta}{\rho}f_4     & 6f_5     & 2f_4                                         & -\frac{3}{\rho}f_3     & \theta & u      & 6      && \cdots & 0 \\
        -\frac{\theta}{\rho}f_5     & 7f_6     & \frac{\theta}{2}f_3+\frac{5}{2}f_5           & -\frac{3}{\rho}f_4     & 0      & \theta & u      && \cdots & 0 \\
        \vdots                      & \vdots   & \vdots                                       & \vdots                 & \vdots && \ddots & \ddots & \ddots & 0 \\
        -\frac{\theta}{\rho}f_{M-2} & Mf_{M-1} & \frac{\theta}{2}f_{M-4}+\frac{M-2}{2}f_{M-2} & -\frac{3}{\rho}f_{M-3} & 0      &\cdots  &0& \theta  & u      & M \\
        -\frac{\theta}{\rho}f_{M-1} & (M+1)f_M & \frac{\theta}{2}f_{M-3}+\frac{M-1}{2}f_{M-1} & -\frac{3}{\rho}f_{M-2} & 0      & \cdots   & & 0& \theta & u
    \end{pmatrix}.
\end{equation}
and $\textbf{Q}$ is the collision term
\begin{equation*}
    \textbf{Q} = (0, 0, 0, -\frac{1}{\tau}f_3, -\frac{1}{\tau}f_4, \cdots, -\frac{1}{\tau}f_M)^T.
\end{equation*}

However, the Grad's moment system \eqref{eq:primitive moment system matrix vector} is not globally hyperbolic, i.e. $A^{\text{Grad}}$ is not generally real diagonalizable. To fix this problem, Cai \textit{et al.}\ utilize independence between moment expansion coefficients and the Jacobian's characteristic polynomial from Grad's moment system, leading to a regularization that gives global hyperbolicity \cite{Cai2013}. This is the so-called Hyperbolic Moment Equations (HME).
The change in HME occurs in the last row of matrix \cref{eq:Grad's moment matrix}, for the second and third entries, which now becomes
\begin{equation}
    \begin{pmatrix}
         -\frac{\theta}{\rho}f_{M-1} & 0 & -f_{M-1} +\frac{\theta}{2}f_{M-3} & -\frac{3}{\rho}f_{M-2} & 0 & \cdots & 0& \theta & u
    \end{pmatrix}.
\end{equation}

In this paper, our goal is to replace the last row of this matrix with a relationship trained by deep learning methods in a way that captures the kinetic effects of the BGK equation but with a relatively small number of moments.

\subsection{Deep Learning Moment Closure}
We approach the closure problem following our recent work on the radiative transfer equation (RTE) \cite{Huang2022-RTE1}. This work differs from the approaches in \cite{Han-WeinanE-2019,Li2023} by learning the gradients of the moments instead of the moments directly, and in follow-up efforts \cite{Huang-RTE2,Huang-RTE3} preserving hyperbolicity of the system.

\subsubsection{Learning the Gradient of Moments and Hyperbolicity}
As shown in the RTE work \cite{Huang2022-RTE1}, in the free streaming limit, where the scattering and absorption cross sections vanish, with isotropic initial data, an exact moment closure can be derived. Motivated by this, Huang \textit{et al.}\ proposed to directly learn the gradient of the unclosed moment \cite{Huang2022-RTE1}.
Generalizing the same idea to the BGK model, we seek a closure relation
\begin{equation}
    \label{eq:NN closure}
    \frac{\partial f_{M+1}}{\partial x} = \sum_{i=0}^{M} \mathcal{N}_i(\rho,u,\theta,f_3,...,f_M)\, \frac{\partial f_i}{\partial x},
\end{equation}
where $\mathcal{N}_i$ is the $i$-th output from the neural network and is a nonlinear function depending on the lower-order moments, $(\rho,u,\theta,f_3,...,f_M)$. Substituting \cref{eq:NN closure} into \cref{eq:grad moment recursion relation} we have
\begin{equation}
    \label{eq:NN closure in recursive formula}
    \begin{split}
    &\frac{\partial f_M}{\partial t} - \frac{1}{\rho}f_{M-2}\frac{\partial q}{\partial x} -f_{M-1}\frac{\theta}{\rho} \frac{\partial \rho}{\partial x} + \frac{\partial \theta}{\partial x}\left( \frac{M-1}{2}f_{M-1} +\frac{\theta}{2} f_{M-3} \right) + (M+1)f_M\frac{\partial u}{\partial x} \nonumber\\ 
    &\quad\quad + u\frac{\partial f_M}{\partial x} +\theta \frac{\partial f_{M-1}}{\partial x}+(M+1)\sum_{i=0}^{M} \mathcal{N}_i(\rho,u,\theta,f_3,...,f_M)\, \frac{\partial f_i}{\partial x}=0.
\end{split}
\end{equation}
Thus the moment system \cref{eq:primitive moment system matrix vector} is now changed with the last row of matrix \cref{eq:Grad's moment matrix} given by
\begin{equation}
\label{eq:moment system with ML closure}
\begin{pmatrix}
    a_0 & a_1 & a_2 & a_3 & \cdots & a_{M-1} & a_{M}
\end{pmatrix},
\end{equation}
where
\begin{equation}
    \label{eq:NN coefficients for closure}
    a_i = 
    \begin{cases}
        -\frac{\theta}{\rho}f_{M-1} + (M+1)\mathcal{N}_0, & i=0, \\
        (M+1)f_M + (M+1)\mathcal{N}_1, & i=1, \\
        \frac{\theta}{\rho}f_{M-3} + \frac{M-1}{2}f_{M-1} + (M+1)\mathcal{N}_2, & i=2, \\
        -\frac{3}{\rho}f_{M-2} + (M+1)\mathcal{N}_3, & i=3, \\
        \theta + (M+1)\mathcal{N}_{M-1}, & i= M -1, \\
        u+ (M+1)\mathcal{N}_M, & i=M,\\
        (M+1)\mathcal{N}_i, & \text{otherwise}.
    \end{cases}
\end{equation}
We denote this ML moment closure system by
\begin{equation}
    \label{eq:ML moment system}
    \frac{\partial{\mathbf{w}}}{\partial t} + A^{\text{ML}}\frac{\partial{\mathbf{w}}}{\partial x} = \textbf{Q},
\end{equation}
where $A^{\text{ML}}$ is a matrix which only differs from $A^{\text{Grad}}$ in the last row. Next, we will derive some constraints on the neural networks $\mathcal{N}_i$ for $0\le i\le M$ in \eqref{eq:NN closure} such that the system \eqref{eq:ML moment system} satisfies Galilean invariance and hyperbolicity.

\subsubsection{Galilean Invariance}
Our moment closure should preserve Galilean invariance. We prove a condition for Galilean invariance.
\begin{thm}\label{theorem:Galilean-invariance}
Consider the following moment closure system without collision terms
    \begin{equation}
        \label{eq:invariance pf. moment system}
        \frac{\partial \mathbf{w}}{\partial t} + A(\mathbf{w})\frac{\partial \mathbf{w}}{\partial x} = 0,
    \end{equation}
    where $\mathbf{w} = (\rho, u, \theta, f_3, f_4,\cdots,f_M)^T \in \mathbb{R}^{M+1}$. The system is Galilean invariant if and only if $(A(\mathbf{w})-uI)$ is independent of $u$ where $I$ is the identity matrix.    
\end{thm}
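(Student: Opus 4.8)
The plan is to impose a Galilean boost directly on the quasilinear system and require that the transformed equation be identical in form to the original; the stated condition on $A(\mathbf{w})-uI$ then falls out of matching the two.

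First I would fix the transformation rules. A boost to a frame moving with constant velocity $c$ acts on the independent variables by $x' = x - ct$, $t' = t$, so the chain rule gives $\partial_t = \partial_{t'} - c\,\partial_{x'}$ and $\partial_x = \partial_{x'}$. For the dependent variables I would use the physical content of the Grad moments: since $\rho$, $\theta$, and all $f_k$ with $k\ge 3$ are built from the peculiar velocity $z=(v-u)/\sqrt{\theta}$, which is itself boost invariant, only the macroscopic velocity transforms, as $u' = u - c$. In vector notation $\mathbf{w}' = \mathbf{w} - c\,\mathbf{e}_2$, with $\mathbf{e}_2$ the unit vector selecting the $u$ entry.

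Next I would substitute into $\partial_t \mathbf{w} + A(\mathbf{w})\,\partial_x \mathbf{w} = 0$. Because $c\,\mathbf{e}_2$ is constant, $\mathbf{w}$ and $\mathbf{w}'$ have identical derivatives, and the system becomes $\partial_{t'}\mathbf{w}' + \bigl(A(\mathbf{w}) - cI\bigr)\,\partial_{x'}\mathbf{w}' = 0$. Galilean invariance means, by definition, that in the primed frame the system must read $\partial_{t'}\mathbf{w}' + A(\mathbf{w}')\,\partial_{x'}\mathbf{w}' = 0$ with the very same matrix function $A$ now evaluated at the boosted state. Comparing the coefficients of $\partial_{x'}\mathbf{w}'$ yields the algebraic identity $A(\mathbf{w}) - cI = A(\mathbf{w}')$, required to hold for all admissible $\mathbf{w}$ and all boost speeds $c$.

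Finally I would rewrite this identity through $B(\mathbf{w}) := A(\mathbf{w}) - uI$. Using $A(\mathbf{w}) = B(\mathbf{w}) + uI$ on the left and $A(\mathbf{w}') = B(\mathbf{w}') + (u-c)I$ on the right, the common $(u-c)I$ cancels and the identity collapses to $B(\rho,u,\theta,\dots) = B(\rho,u-c,\theta,\dots)$; since $\mathbf{w}$ and $\mathbf{w}'$ differ only in the $u$-slot, demanding this for every $c$ is exactly the assertion that $B = A - uI$ is independent of $u$. Running the same computation in reverse establishes the converse, so the two conditions are equivalent. The hard part will not be the algebra, which is brief, but justifying the transformation law for $\mathbf{w}$—in particular arguing that the boost acts on $A$ solely through the single replacement $u \mapsto u - c$—since it is precisely this structure that makes the final cancellation work.
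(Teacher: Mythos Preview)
Your proposal is correct and follows essentially the same route as the paper: set up the boost $x'=x-ct$, $t'=t$, use the chain rule to rewrite the system as $\partial_{t'}\mathbf{w}' + (A(\mathbf{w})-cI)\partial_{x'}\mathbf{w}'=0$, and compare with $\partial_{t'}\mathbf{w}' + A(\mathbf{w}')\partial_{x'}\mathbf{w}'=0$ to obtain $A(\mathbf{w})-cI = A(\mathbf{w}')$. The only cosmetic difference is that in the $(\Rightarrow)$ direction the paper differentiates this identity with respect to $c$ to conclude $\partial_u(A-uI)=0$, whereas you reach the same conclusion algebraically via the substitution $B=A-uI$; both are equally valid.
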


\begin{proof}
Consider two inertial (non-accelerating) frames of reference given by $(x,t)$ and $(x^*,t^*)$. The frames are equivalent at an initial time and the second frame is moving at a constant velocity, $c$, with respect to the other frame. That is we have $t^* = t$ and $x^* = x -ct$.

The Galilean invariance means that, if $w(x,t)$ satisfies \cref{eq:invariance pf. moment system}, then $\mathbf{w}^*(x^*,t^*) = (\rho^*, u^*, \theta^*, f_3^*,\cdots,f_M^*)^T$ with $\rho = \rho^*$, $u^* = u-c$, $\theta^* = \theta$, and $f_k^* = f_k$ for all $k$ such that $3\leq k \leq M$, also satisfies \cref{eq:invariance pf. moment system}.

    Applying the chain rule we have
    \begin{equation*}
        \frac{\partial }{\partial t} = \frac{\partial t^*}{\partial t}\frac{\partial }{\partial t^*} + \frac{\partial x^*}{\partial t}\frac{\partial }{\partial x^*} =\frac{\partial }{\partial t^*} - c\frac{\partial }{\partial x^*}
    \end{equation*}
    and
    \begin{equation*}
        \frac{\partial }{\partial x} = \frac{\partial t^*}{\partial x}\frac{\partial }{\partial t^*} + \frac{\partial x^*}{\partial x}\frac{\partial }{\partial x^*} =\frac{\partial }{\partial x^*}.
    \end{equation*}

    Proving ($\impliedby$): Suppose $\mathbf{w}=\mathbf{w}(x,t)$ solves \cref{eq:invariance pf. moment system}. Then $\mathbf{w} = \mathbf{w}(x^*,t^*)$ solves
    \begin{equation*}
        \frac{\partial \mathbf{w}}{\partial t^*}-c\frac{\partial \mathbf{w}}{\partial x^*} + A(\mathbf{w})\frac{\partial \mathbf{w}}{\partial x^*} = 0
    \end{equation*}
    rearranging
    \begin{equation*}
        \frac{\partial \mathbf{w}}{\partial t^*}+ (A(\mathbf{w})-cI)\frac{\partial \mathbf{w}}{\partial x^*} = 0
    \end{equation*}
    Since $A(\mathbf{w}) -uI$ is independent of $u$
    \begin{equation*}
        A(\mathbf{w})-uI = A(\mathbf{w}^*)-u^*I = A(\mathbf{w}^*) - (u-c)I
    \end{equation*}
    or
    \begin{equation*}
        A(\mathbf{w}^*) = A(\mathbf{w})-cI
    \end{equation*}
    Thus $\mathbf{w}^*(x^*,t^*)$ satisfies $\frac{\partial \mathbf{w}^*}{\partial t^*} + A(\mathbf{w}^*)\frac{\partial \mathbf{w}^*}{\partial x^*} = 0$.
    
    Proving ($\implies$): Comparing equations involving $\mathbf{w}$ and $\mathbf{w}^*$ we have
    \begin{equation*}
        A(\mathbf{w}^*) = A(\mathbf{w})-cI
    \end{equation*}
    Differentiating with respect to $c$,
    \begin{equation*}
        -\frac{\partial A(\mathbf{w}^*)}{\partial c} = -I
    \end{equation*}
    and thus 
    \begin{equation*}
        \frac{\partial}{\partial u}(A(\mathbf{w}^*)-uI) = 0.
    \end{equation*}
\end{proof}

From Theorem \ref{theorem:Galilean-invariance}, it is easy to show the following corollary:
\begin{cor}
Consider the ML moment closure system \eqref{eq:ML moment system}. Denote the eigenvalues of $A^{\text{ML}}$ by $r_k$ for $0\leq k \leq M$. The system is Galilean invariant if and only if any of the two conditions is satisfied:
    \begin{enumerate}
        \item $a_k$ for $0\leq k \leq M-1$ and $(a_m-u)$ are independent of $u$.
        \item $(r_k -u)$ for $0\leq k\leq M$ are independent of $u$.
    \end{enumerate}
\end{cor}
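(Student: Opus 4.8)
The plan is to deduce everything from \Cref{theorem:Galilean-invariance}, which says the system is Galilean invariant exactly when $A^{\text{ML}}-uI$ is independent of $u$. So first I would analyze where $u$ can enter $A^{\text{ML}}-uI$. Since $A^{\text{ML}}$ and $A^{\text{Grad}}$ agree in every row but the last, and a direct inspection of \eqref{eq:Grad's moment matrix} shows that in rows $0$ through $M-1$ the only appearance of $u$ is on the diagonal, subtracting $uI$ kills that dependence and leaves those rows manifestly $u$-free (their remaining entries are built from $\rho,\theta$ and $f_3,\dots,f_M$). Hence $A^{\text{ML}}-uI$ is $u$-independent if and only if its last row $(a_0,\dots,a_{M-1},a_M-u)$ is; subtracting $uI$ only touches the diagonal entry $a_M$. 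This is precisely condition (1), so condition (1) $\Leftrightarrow$ Galilean invariance is immediate.

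Next I would treat condition (2). Writing $\mu_k := r_k-u$, these are exactly the eigenvalues of $A^{\text{ML}}-uI$. The forward implication is then free: if the system is Galilean invariant then $A^{\text{ML}}-uI$ is $u$-independent, so its spectrum $\{\mu_k\}$ and hence each $r_k-u$ is $u$-independent. The substance is the converse, for which I would work with the characteristic polynomial $p(\mu):=\det\!\big(\mu I-(A^{\text{ML}}-uI)\big)$. Expanding $p$ along the last row, the entry in column $j<M$ is $-a_j$ and in column $M$ is $\mu-(a_M-u)$, while every cofactor $C_{M,j}(\mu)$ is the determinant of a submatrix drawn only from rows $0,\dots,M-1$ and is therefore a $u$-independent polynomial in $\mu$. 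This gives $p(\mu)=\sum_{j=0}^{M-1}(-a_j)C_{M,j}(\mu)+(\mu-(a_M-u))C_{M,M}(\mu)$, so the entire $u$-dependence of $p$ is carried linearly by the last-row entries.

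From here the argument closes as follows. Because $p$ is monic in $\mu$ with roots $\mu_k$, its coefficients are the elementary symmetric functions of $\{\mu_k\}$; if every $\mu_k=r_k-u$ is $u$-independent then $p$ is $u$-independent, whence $\partial_u p\equiv 0$. Differentiating the expansion in $u$ (the cofactors being $u$-free) yields $\sum_{j<M}(\partial_u a_j)\,C_{M,j}(\mu)+\big(\partial_u(a_M-u)\big)\,C_{M,M}(\mu)\equiv 0$ as a polynomial identity in $\mu$. If the cofactors $C_{M,0},\dots,C_{M,M}$ are linearly independent over $\R$, this forces $\partial_u a_j=0$ for $j<M$ and $\partial_u(a_M-u)=0$, i.e.\ condition (1), closing the loop Galilean $\Leftrightarrow$ (1) $\Leftrightarrow$ (2).

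The hard part will be this linear independence of the last-row cofactors. I would reduce it to a cyclicity statement: the components of $\mathrm{adj}\big(\mu I-(A^{\text{ML}}-uI)\big)e_M$ are exactly the $C_{M,j}(\mu)$, and these are linearly independent precisely when $e_M$ is a cyclic vector for $A^{\text{ML}}-uI$, equivalently when the Krylov vectors $e_M,(A^{\text{ML}}-uI)e_M,\dots,(A^{\text{ML}}-uI)^M e_M$ are independent. I expect this to hold because the superdiagonal entries of \eqref{eq:Grad's moment matrix}, namely $\rho,1,\tfrac{6}{\rho},4,5,\dots,M$, are all nonzero in the physical regime $\rho,\theta>0$, which successively couples $e_M$ to all lower coordinate directions; verifying this coupling carefully (and confirming that the degeneracies in the low-index cofactors do not spoil independence) is the one step I would not expect to be routine.
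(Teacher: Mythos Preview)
Your approach is correct and considerably more thorough than the paper's own treatment, which provides no argument beyond ``From Theorem~\ref{theorem:Galilean-invariance}, it is easy to show the following corollary.'' Your handling of condition~(1) and of the implication Galilean $\Rightarrow$ (2) is exactly right.

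For $(2)\Rightarrow(1)$ your last-row cofactor expansion is the correct move, but the cyclicity detour is heavier than needed and the equivalence you state between polynomial independence of the $C_{M,j}$ and cyclicity of $e_M$ is not quite the right formulation. The linear independence of $C_{M,0}(\mu),\dots,C_{M,M}(\mu)$ is in fact routine: one checks directly that $\deg C_{M,j}=j$, with leading coefficient a product of superdiagonal entries of \eqref{eq:Grad's moment matrix}, nonzero precisely because those entries $\rho,\,1,\,\tfrac{6}{\rho},\,4,\dots,M$ never vanish (unreduced Hessenberg). Polynomials of strictly increasing degree are automatically independent, so the step you flagged as non-routine collapses. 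Equivalently---and this is presumably what the authors regard as ``easy''---the associated polynomial sequence $\{q_j\}$ of Definition~\ref{def:assoc polynomial seq.} satisfies $\deg q_j=j$, is determined entirely by the $u$-free first $M$ rows of $A^{\text{ML}}-uI$, and gives
\[
q_{M+1}(\mu)=\mu\, q_M(\mu)-\sum_{j=0}^{M-1}a_j\, q_j(\mu)-(a_M-u)\,q_M(\mu);
\]
since $\{q_j\}_{0\le j\le M}$ is a basis for polynomials of degree at most $M$, $u$-independence of the characteristic polynomial forces $u$-independence of each coefficient $a_0,\dots,a_{M-1},\,a_M-u$. The paper develops exactly this machinery in the subsection immediately following the corollary.
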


From the above corollary, to guarantee the Galilean invariance, the eigenvalues of $A^{\text{ML}}$ in \eqref{eq:ML moment system} must have the form $r_k = u + \tilde{r}_k$ where $\tilde{r}_k$ is independent of $u$.

\subsubsection{Provable Hyperbolic Structure}

By taking advantage of the moment system structure, we will develop the framework for the neural network to maintain hyperbolicity by relating the coefficients of gradients to the eigenvalues of the moment system. First, we recall some definitions and theorems in \cite{Huang-RTE3}.

\begin{definition}[lower Hessenberg matrix \cite{Huang-RTE3}] 
\label{def: lower hessenberg}
The matrix $H=(h_{ij})_{n\times n}$ is called lower Hessenberg matrix if $h_{ij}=0$ for $j>i+1$. If $h_{i,i+1} \neq 0$ for $i=1,2,...,n-1$, it is called unreduced lower Hessenberg matrix.
\end{definition}

\begin{definition}[associated polynomial sequence \cite{Elouafi2009, Huang-RTE3}]
\label{def:assoc polynomial seq.}
Let $H=(h_{ij})_{n\times n}$ be an unreduced lower Hessenberg matrix. The associated polynomial sequence $\{ q_i \}_{0\leq i\leq n}$ with $H$ is defined as $q_0=1$ and
\begin{equation}
    \label{eq:assoc polynomial recurrence}
    q_i(x) = \frac{1}{h_{i,i+1}}\left( xq_{i-1}(x) - \sum_{j=1}^i h_{ij}q_{j-1}(x) \right),\quad 1\leq i\leq n,  
\end{equation}
with $h_{n,n+1}=1.$
\end{definition}

\begin{theorem}[\cite{Elouafi2009, Huang-RTE3}]
    \label{thm:eigenvalue vectors for assoc. polynomial}
    Let $H=(h_{ij})_{n\times n}$ be an unreduced lower Hessenberg matrix and $\{ q_i \}_{0\leq i\leq n}$ is the associated polynomials sequence with $H$. The following are true:
    \begin{enumerate}
        \item If $\lambda$ is a root of $q_n$, then $\lambda$ is an eigenvalue of $H$ and a corresponding eigenvector is $(q_0(\lambda),q_1(\lambda),...,q_{n-1}(\lambda))^T$.
        \item If all the roots of $q_n$ are simple, then the characteristic polynomial of $H$ is $\mu q_n$ where $\mu = \prod_{i=1}^{n-1}h_{i,i+1}$, that is
        \begin{equation}
            \det(x I -H)  = \mu q_n(x),
        \end{equation}
        where $I$ is the identity matrix.
    \end{enumerate}
\end{theorem}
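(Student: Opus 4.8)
The plan is to prove the two claims in sequence, with the first doing essentially all of the work and the second following from a dimension-counting argument. The central object is the candidate eigenvector $\mathbf{q}(\lambda) = (q_0(\lambda), q_1(\lambda), \ldots, q_{n-1}(\lambda))^T$, and the strategy is to verify $H\mathbf{q}(\lambda) = \lambda \mathbf{q}(\lambda)$ row by row, exploiting the fact that the recurrence \eqref{eq:assoc polynomial recurrence} is tailored exactly so that each row of $H$, applied to $\mathbf{q}$, telescopes.

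For Part 1, I would first rewrite the recurrence \eqref{eq:assoc polynomial recurrence} in the equivalent form $\sum_{j=1}^{i} h_{ij} q_{j-1}(x) = x\, q_{i-1}(x) - h_{i,i+1} q_i(x)$, valid for $1 \le i \le n$ with the convention $h_{n,n+1} = 1$. Because $H$ is lower Hessenberg, the $i$-th entry of $H\mathbf{q}(\lambda)$ is $\sum_{j=1}^{i+1} h_{ij} q_{j-1}(\lambda)$ for $i < n$. Splitting off the $j = i+1$ term and substituting the rewritten recurrence, the terms $h_{i,i+1} q_i(\lambda)$ cancel and the $i$-th entry collapses to $\lambda\, q_{i-1}(\lambda)$, which is exactly $\lambda$ times the $i$-th component of $\mathbf{q}(\lambda)$. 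For the final row $i = n$ there is no $(n+1)$-th column, so the rewritten recurrence (with $h_{n,n+1}=1$) yields the $n$-th entry as $\lambda\, q_{n-1}(\lambda) - q_n(\lambda)$; this is where the hypothesis $q_n(\lambda)=0$ enters, annihilating the extra term and leaving $\lambda\, q_{n-1}(\lambda)$. Thus all $n$ rows match, and since $q_0 = 1$ forces the first component of $\mathbf{q}(\lambda)$ to be nonzero, $\mathbf{q}(\lambda)$ is a genuine eigenvector with eigenvalue $\lambda$.

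For Part 2, I would first record a degree count: induction on the recurrence shows $\deg q_i = i$ with leading coefficient $1/\prod_{k=1}^{i} h_{k,k+1}$, so $q_n$ has degree $n$ and, using $h_{n,n+1}=1$, leading coefficient $1/\mu$ with $\mu = \prod_{k=1}^{n-1} h_{k,k+1}$. Hence $\mu q_n$ is \emph{monic} of degree $n$, exactly like the characteristic polynomial $\det(xI - H)$. Under the simple-roots hypothesis, $q_n$ has $n$ distinct roots $\lambda_1, \ldots, \lambda_n$, each of which is an eigenvalue of $H$ by Part 1; since an $n \times n$ matrix has at most $n$ eigenvalues counted with multiplicity, these are all of them and each is simple. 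Two monic polynomials of degree $n$ with the same $n$ distinct roots coincide, giving $\det(xI - H) = \mu q_n(x)$.

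The main obstacle is the bookkeeping in Part 1: one must treat the interior rows and the terminal row on a unified footing, and the only reason this succeeds is the deliberate convention $h_{n,n+1} = 1$, which lets the terminal row obey the same telescoping identity as the others while simultaneously producing the single uncanceled term $-q_n(\lambda)$ that the eigenvalue condition is designed to kill. Everything else — the degree and leading-coefficient computation together with the distinct-roots counting argument — is routine once this identity is in hand.
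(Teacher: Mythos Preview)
Your proposal is correct. Note, however, that the paper does not supply its own proof of this theorem: it is stated as a quoted result from \cite{Elouafi2009, Huang-RTE3} and invoked without argument, so there is no in-paper proof to compare against. Your row-by-row verification of $H\mathbf{q}(\lambda)=\lambda\mathbf{q}(\lambda)$ via the rearranged recurrence, together with the leading-coefficient and dimension count for Part 2, is the standard and natural argument and would serve perfectly well as a self-contained proof were one needed.
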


\begin{theorem}[\cite{Huang-RTE3}]
    \label{thm:consequences of hessenberg, associated polynomial, eigens}
    Let $H=(h_{ij})_{n\times n}$ be an unreduced lower Hessenberg matrix and $\{ q_i \}_{0\leq i\leq n}$ is the associated polynomials sequence with $H$. The following are equivalent:
    \begin{enumerate}
        \item $H$ is real diagonalizable,
        \item all eigenvalues of $H$ are distinct and real,
        \item all roots of $q_n$ are simple and real.
    \end{enumerate}
\end{theorem}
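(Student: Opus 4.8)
The plan is to prove the three equivalences by isolating the two genuinely non-trivial ingredients — an eigenvalue/root correspondence and the fact that an unreduced Hessenberg matrix is nonderogatory — and then assembling them with standard linear algebra. The implication $(2)\Rightarrow(1)$ is immediate: an $n\times n$ matrix with $n$ distinct real eigenvalues has $n$ linearly independent real eigenvectors and is therefore real diagonalizable. Thus the work is concentrated in $(1)\Rightarrow(2)$ and in the equivalence $(2)\Leftrightarrow(3)$.

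For $(2)\Leftrightarrow(3)$, first I would establish that $q_n$ is, up to the nonzero constant $\mu=\prod_{i=1}^{n-1}h_{i,i+1}$, the characteristic polynomial of $H$. From the recurrence in Definition \ref{def:assoc polynomial seq.} each $q_i$ has degree $i$ with leading coefficient $\prod_{j=1}^{i}h_{j,j+1}^{-1}$, so $\mu\,q_n$ is monic of degree $n$. Theorem \ref{thm:eigenvalue vectors for assoc. polynomial} already supplies one inclusion (every root of $q_n$ is an eigenvalue); for the reverse I would use the unreduced structure to show that any eigenvector for $\lambda$ must be proportional to $(q_0(\lambda),\dots,q_{n-1}(\lambda))^T$ — the first $n-1$ rows of $(H-\lambda I)v=0$ determine $v_2,\dots,v_n$ from $v_1$ precisely through the recurrence because each $h_{i,i+1}\neq 0$ — whereupon the last row forces $q_n(\lambda)=0$. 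Hence $\lambda$ is an eigenvalue if and only if $q_n(\lambda)=0$, and together with the degree and multiplicity bookkeeping this yields $\det(xI-H)=\mu\,q_n(x)$. Given this identity, (2) (the characteristic polynomial has $n$ distinct real roots) and (3) ($q_n$ has $n$ simple real roots) are the same statement.

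The crux is $(1)\Rightarrow(2)$, and the key structural fact is that $H$ is nonderogatory. I would prove that $\operatorname{rank}(H-\lambda I)\geq n-1$ for every $\lambda$ by deleting the last row and first column of $H-\lambda I$: the resulting $(n-1)\times(n-1)$ block is lower triangular with diagonal entries $h_{i,i+1}$, all nonzero by the unreduced hypothesis, hence nonsingular. Therefore every eigenvalue has geometric multiplicity exactly one. If $H$ is real diagonalizable then all eigenvalues are real and the geometric and algebraic multiplicities coincide; combined with geometric multiplicity one this forces every eigenvalue to be algebraically simple, so the eigenvalues are $n$ distinct real numbers, which is (2).

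I expect the main obstacle to be the bookkeeping needed to make the eigenvalue/root correspondence account for multiplicities rather than merely the set of roots: Theorem \ref{thm:eigenvalue vectors for assoc. polynomial} as stated only gives the determinant identity under a simplicity hypothesis, so to avoid circularity in $(2)\Rightarrow(3)$ I would either derive $\det(xI-H)=\mu\,q_n(x)$ unconditionally through a Hessenberg (Hyman-type) determinant recurrence on the leading principal minors, or route the logic as the cycle $(1)\Rightarrow(2)\Rightarrow(3)\Rightarrow(1)$ so that the simplicity needed to invoke the determinant identity is available exactly when it is used. Once the nonderogatory property and the eigenvalue/root correspondence are in place, the remaining steps are routine.
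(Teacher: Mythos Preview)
The paper does not supply its own proof of this theorem; it is quoted from \cite{Huang-RTE3} and used as a black box, so there is no in-paper argument to compare against. Your proposal is correct and is essentially the standard proof: the nonderogatory property of an unreduced Hessenberg matrix (via the nonsingular $(n-1)\times(n-1)$ minor with diagonal $h_{i,i+1}$) gives $(1)\Rightarrow(2)$, and the unconditional identity $\det(xI-H)=\mu\,q_n(x)$ --- which you correctly note can be obtained either by a direct Hessenberg determinant recurrence or by arranging the implications cyclically --- gives $(2)\Leftrightarrow(3)$. Your identification of the multiplicity bookkeeping as the only subtle point is accurate; the cleanest route is to prove the determinant identity unconditionally (the recurrence for the leading principal minors of $xI-H$ matches the recurrence \eqref{eq:assoc polynomial recurrence} up to the factor $\mu$), after which no circularity arises.
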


The proceeding theorems provide a method to ensure hyperbolicity of the moment system when training a neural network based closure. We derive the associated polynomial sequence for the matrix of the ML moment system \cref{eq:ML moment system}.

\begin{theorem}
    The associated polynomial sequence $\{q_k(x)\}$ where ${0\leq k \leq M+1}$ for $A^{\text{ML}}$ in \eqref{eq:ML moment system} satisfies the following relationship:
    \begin{align}
        q_0(x) &= \He_0, \quad q_1(x) = \frac{\theta^{\frac{1}{2}}}{\rho}\He_1, \quad q_2(x) = \frac{\theta}{\rho}\He_2, \quad q_3(x) = \frac{\theta^\frac{3}{2}}{6}\He_3, \nonumber \\
        q_k(x) &= \frac{\theta^\frac{k}{2}}{k!}\He_k - \frac{\theta f_{k-2}}{2\rho}\He_2 - \frac{\theta^\frac{1}{2} f_{k-1}}{\rho}\He_1,\quad 4\leq k \leq M, \nonumber \\
        q_{M+1}(x) &= \frac{\theta^{\frac{M+1}{2}}}{M!}\He_{M+1} - (a_M - u)\frac{\theta^\frac{M}{2}}{M!}\He_M \nonumber \\
        &\quad +\left( \frac{\theta^\frac{M+1}{2}}{(M-1)!} -a_{M-1}\frac{\theta^\frac{M-1}{2}}{(M-1)!} \right)\He_{M-1} \nonumber\\
        &\quad -\sum^{M-2}_{k=4} a_k \frac{\theta^{\frac{k}{2}}}{k!}\He_k + \left( -\frac{\theta^\frac{3}{2}f_{M-2}}{2\rho} -a_3\frac{\theta^\frac{3}{2}}{6} \right)\He_3 \nonumber \\
        &\quad +\left(-\frac{\theta f_{M-1}}{\rho} + (a_M-u)\frac{\theta f_{M-2}}{2\rho} + \sum^M_{k=6}a_{k-1}\frac{\theta f_{k-3}}{2\rho} -a_2\frac{\theta}{\rho} \right)\He_2 \nonumber \\
        &\quad + \left(-\frac{\theta^\frac{3}{2} f_{M-2}}{\rho} + (a_M -u)\frac{\theta^\frac{1}{2}f_{M-1}}{\rho} + \sum^M_{k=5}a_{k-1}\frac{\theta^\frac{1}{2}f_{k-2}}{\rho} -a_1\frac{\theta^\frac{1}{2}}{\rho}\right)\He_1 \nonumber \\
        &\quad + \left( -\frac{\theta f_{M-1}}{\rho} -a_0\right)\He_0. \nonumber
    \end{align}
    where the argument $\xi = \frac{x-u}{\sqrt{\theta}}$ in the Hermite polynomials is omitted. {  If further assuming all the roots of $q_{M+1}$ are simple,} the eigenpolynomial of $A^{ML}$ satisfies:
    \begin{equation}
        \label{eq:eigen-polynomial-ml-2}
        p_{M+1}(x) = M!q_{M+1}(x).
    \end{equation}
\end{theorem}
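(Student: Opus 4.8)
The plan is to compute the associated polynomial sequence directly from the recurrence \eqref{eq:assoc polynomial recurrence} and then read off the eigenpolynomial from Theorem~\ref{thm:eigenvalue vectors for assoc. polynomial}. The first thing to check is that $A^{\text{ML}}$ is an \emph{unreduced} lower Hessenberg matrix in the sense of Definition~\ref{def: lower hessenberg}: its superdiagonal entries, read off from \eqref{eq:Grad's moment matrix}, are $\rho,\,1,\,\tfrac{6}{\rho},\,4,\,5,\,\dots,\,M$, all nonzero for $\rho>0$, and replacing the last row by $(a_0,\dots,a_M)$ does not touch the superdiagonal. Hence Definitions~\ref{def: lower hessenberg}--\ref{def:assoc polynomial seq.} and Theorem~\ref{thm:eigenvalue vectors for assoc. polynomial} all apply, with the index convention that $q_k$ is generated from row $k$ of $A^{\text{ML}}$ (so $q_0=1$ is the seed and $q_{M+1}$ comes from the modified last row).

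Second, I would establish the base cases $q_0,\dots,q_3$ directly and then prove the generic formula for $4\le k\le M$ by induction on $k$. The engine is the substitution $x=u+\sqrt{\theta}\,\xi$, which turns the ``tridiagonal core'' of each row --- subdiagonal $\theta$, diagonal $u$, superdiagonal $k$ --- into the probabilists' Hermite three-term recurrence $\He_{k}(\xi)=\xi\He_{k-1}(\xi)-(k-1)\He_{k-2}(\xi)$; dividing by the superdiagonal entry $k$ is exactly what produces the normalization $\theta^{k/2}/k!$ on the leading term. The remaining moment-dependent entries of row $k$ --- namely $-\tfrac{\theta}{\rho}f_{k-2}$ in column $1$, $kf_{k-1}$ in column $2$, $\tfrac{\theta}{2}f_{k-4}+\tfrac{k-2}{2}f_{k-2}$ in column $3$, and $-\tfrac{3}{\rho}f_{k-3}$ in column $4$ --- multiply the low-order polynomials $q_0=\He_0$, $q_1\propto\He_1$, $q_2\propto\He_2$, $q_3\propto\He_3$ and thereby generate correction terms. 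Substituting the inductive forms of $q_{k-1},q_{k-2}$ and expanding each $\xi\He_j$ by the same recurrence, I would verify that the $\He_0$ and $\He_3$ corrections cancel identically while the $\He_1$ and $\He_2$ corrections collapse (after dividing by $k$) into precisely $-\tfrac{\theta f_{k-2}}{2\rho}\He_2-\tfrac{\theta^{1/2}f_{k-1}}{\rho}\He_1$; the identities $f_0=\rho$ and $f_1=f_2=0$ clean up the low-index cases.

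Third, for the top polynomial the convention $h_{M+1,M+2}=1$ reduces the recurrence to
\begin{equation*}
q_{M+1}(x)=x\,q_M(x)-\sum_{i=0}^{M}a_i\,q_i(x).
\end{equation*}
Here I would insert the closed forms of $q_0,\dots,q_M$ just obtained and collect by Hermite order. Writing $x-a_M=\sqrt{\theta}\,\xi-(a_M-u)$ and using $\xi\He_M=\He_{M+1}+M\He_{M-1}$ accounts for the leading $\He_{M+1}$ term, the $-(a_M-u)\He_M$ term, and half of the $\He_{M-1}$ coefficient; the contribution $-a_{M-1}q_{M-1}$ supplies the other half, and the terms $-a_k q_k$ for $4\le k\le M-2$ give the middle sum. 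The delicate part is the $\He_2$ and $\He_1$ rows of the answer, which aggregate the $\He_2,\He_1$ \emph{corrections} carried by every $q_i$ with $i\ge4$ (each weighted by $a_i$) together with the corrections produced when $x$ and $a_M$ act on $q_M$; organizing these into the stated coefficients is where I expect the bulk of the work to lie.

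Finally, with $q_{M+1}$ in hand and under the stated simplicity assumption on its roots, part~2 of Theorem~\ref{thm:eigenvalue vectors for assoc. polynomial} gives $\det(xI-A^{\text{ML}})=\mu\,q_{M+1}(x)$ with $\mu=\prod_{i=1}^{M}h_{i,i+1}=\rho\cdot1\cdot\tfrac{6}{\rho}\cdot(4\cdot5\cdots M)=M!$, which is exactly \eqref{eq:eigen-polynomial-ml-2}. The main obstacle is thus not conceptual but organizational: carrying the two families of correction terms through the Hermite recurrence in the induction, and then through the weighted sum $\sum_i a_i q_i$, without losing the cancellations that make the final $\He_2$ and $\He_1$ coefficients emerge as stated.
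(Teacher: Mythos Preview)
Your proposal is correct and follows essentially the same route as the paper: direct calculation of $q_0,\dots,q_3$, induction on $k$ for $4\le k\le M$ driven by the Hermite three-term recurrence $\xi\He_k=\He_{k+1}+k\He_{k-1}$ after the substitution $x=u+\sqrt{\theta}\,\xi$, then the explicit expansion $q_{M+1}=(x-a_M)q_M-\sum_{i<M}a_iq_i$ collected by Hermite degree, and finally an appeal to Theorem~\ref{thm:eigenvalue vectors for assoc. polynomial}. Your extra steps --- verifying the unreduced Hessenberg condition and computing $\mu=\rho\cdot1\cdot\tfrac{6}{\rho}\cdot4\cdots M=M!$ explicitly --- are details the paper leaves implicit, but otherwise the arguments coincide.
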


\begin{proof}
Using Definition \ref{def:assoc polynomial seq.}, $q_i(x)$ for $0\le i\le 3$ can be directly calculated, and the details are omitted here.

Next, we assume that the relationship holds for $k=n-1$ with $4\le n\le M$ and we would like to prove that it also holds for $k=n$. By definition \eqref{eq:assoc polynomial recurrence} we have
\begin{align}
    nq_n(x) ={}& (x-u)q_{n-1}(x) + \frac{\theta f_{n-2}}{\rho}q_0(x) -nf_{n-1}q_1(x) \nonumber \\
    & - \frac{1}{2}((n-2)f_{n-2} +\theta f_{n-4})q_2(x) + \frac{3f_{n-3}}{\rho}q_3(x) -\theta q_{n-2} \nonumber \\
    ={}& (x-u)\left( \frac{\theta^\frac{n-1}{2}}{(n-1)!}\He_{n-1}(\xi) - \frac{\theta f_{n-3}}{2\rho}\He_2(\xi) - \frac{\theta^\frac{1}{2} f_{k-2}}{\rho}\He_1(\xi) \right) \nonumber \\
    & + \frac{\theta f_{n-2}}{\rho}\He_0(\xi) -nf_{n-1}\frac{\theta^\frac{1}{2}}{\rho}\He_1(\xi) - \frac{1}{2}((n-2)f_{n-2} +\theta f_{n-4})\frac{\theta}{\rho}\He_2(\xi)\nonumber \\
    & + \frac{3f_{n-3}}{\rho}\frac{\theta^\frac{3}{2}}{6}\He_3(\xi) \nonumber \\ 
    & -\theta \left( \frac{\theta^\frac{n-2}{2}}{(n-2)!}\He_{n-2}(\xi) - \frac{\theta f_{n-4}}{2\rho}\He_2(\xi) - \frac{\theta^\frac{1}{2} f_{k-3}}{\rho}\He_1(\xi) \right) \nonumber \\
    ={}& \theta^{\frac{1}{2}}\xi\left( \frac{\theta^\frac{n-1}{2}}{(n-1)!}\He_{n-1}(\xi) - \frac{\theta f_{n-3}}{2\rho}\He_2(\xi) - \frac{\theta^\frac{1}{2} f_{k-2}}{\rho}\He_1(\xi) \right) \nonumber \\
    & + \frac{\theta f_{n-2}}{\rho}\He_0(\xi) -nf_{n-1}\frac{\theta^\frac{1}{2}}{\rho}\He_1(\xi) - \frac{1}{2}((n-2)f_{n-2} +\theta f_{n-4})\frac{\theta}{\rho}\He_2(\xi)\nonumber \\
    & + \frac{3f_{n-3}}{\rho}\frac{\theta^\frac{3}{2}}{6}\He_3(\xi) \nonumber \\
    & -\theta \left( \frac{\theta^\frac{n-2}{2}}{(n-2)!}\He_{n-2}(\xi) - \frac{\theta f_{n-4}}{2\rho}\He_2(\xi) - \frac{\theta^\frac{1}{2} f_{n-3}}{\rho}\He_1(\xi) \right) \nonumber \\
    ={}& \frac{\theta^\frac{n}{2}}{(n-1)!}\He_n(\xi) - n\frac{\theta f_{n-2}}{2\rho}\He_2(\xi) -n \frac{\theta^\frac{1}{2} f_{n-1}}{\rho}\He_1(\xi), \nonumber
\end{align}
where we use $x-u = \theta^{\frac{1}{2}} \xi$ and the recurrence relation $\xi\He_k(\xi) = \He_{k+1}(\xi) + k \He_{k-1}(\xi)$. Thus we have
\begin{equation*}
    q_n(x) = \frac{\theta^\frac{n}{2}}{n!}\He_n(\xi) - \frac{\theta f_{n-2}}{2\rho}\He_2(\xi) - \frac{\theta^\frac{1}{2} f_{n-1}}{\rho}\He_1(\xi). \nonumber
\end{equation*}

For $k = M+1$, we have
    \begin{align*}
        q_{M+1}(x) ={}& (x-a_M)\left( \frac{\theta^\frac{M}{2}}{M!}\He_M -\frac{\theta f_{M-2}}{2\rho}\He_2 - \frac{\theta^\frac{1}{2}f_{M-1}}{\rho}\He_1 \right) \nonumber \\
        & -a_{M-1}\left( \frac{\theta^\frac{M-1}{2}}{(M-1)!}\He_{M-1} -\frac{\theta f_{M-3}}{2\rho}\He_2 - \frac{\theta^\frac{1}{2}f_{M-2}}{\rho}\He_1 \right) \nonumber \\
        & \cdots \nonumber \\
        & -a_5\left( \frac{\theta^\frac{5}{2}}{5!}\He_5 - \frac{\theta f_3}{2\rho}\He_2 - \frac{\theta^\frac{1}{2}f_4}{\rho}\He_1 \right) \nonumber \\
        & -a_4\left( \frac{\theta^\frac{4}{2}}{4!}\He_4 - \frac{\theta^\frac{1}{2}f_3}{\rho}\He_1 \right) \nonumber \\
        & -a_3\frac{\theta^\frac{3}{2}}{6}\He_3 -a_2\frac{\theta}{\rho}\He_2 -a_1\frac{\theta^\frac{1}{2}}{\rho}\He_1 -a_0\He_0,
    \end{align*}
    where we omit the argument in the Hermite polynomials.     
    The first term can be simplified using the recurrence relation:
    \begin{align}
        & (x-a_M)\left( \frac{\theta^\frac{M}{2}}{M!}\He_M -\frac{\theta f_{M-2}}{2\rho}\He_2 - \frac{\theta^\frac{1}{2}f_{M-1}}{\rho}\He_1 \right) \nonumber \\
        ={}& (x-u)\left( \frac{\theta^\frac{M}{2}}{M!}\He_M -\frac{\theta f_{M-2}}{2\rho}\He_2 - \frac{\theta^\frac{1}{2}f_{M-1}}{\rho}\He_1 \right) \nonumber \\
        & - (a_M - u)\left( \frac{\theta^\frac{M}{2}}{M!}\He_M -\frac{\theta f_{M-2}}{2\rho}\He_2 - \frac{\theta^\frac{1}{2}f_{M-1}}{\rho}\He_1 \right) \nonumber \\
        ={}& \frac{\theta^\frac{M}{2}}{M!}(\theta^\frac{1}{2}\He_{M+1} + M\theta^\frac{1}{2}\He_{M-1}) - \frac{\theta f_{M-2}}{2\rho}(\theta^\frac{1}{2}\He_3 +2\theta^\frac{1}{2}\He_1) \nonumber \\
        & -\frac{\theta^\frac{1}{2}f_{M-1}}{\rho}(\theta^\frac{1}{2}\He_2 +\theta^\frac{1}{2}\He_0) \nonumber \\
        & - (a_M -u)\left( \frac{\theta^\frac{M}{2}}{M!}\He_M -\frac{\theta f_{M-2}}{2\rho}\He_2  - \frac{\theta^\frac{1}{2}f_{M-1}}{\rho}\He_1 \right) \nonumber.
    \end{align}
    Then,
    \begin{align*}
        q_{M+1}(x) &= \frac{\theta^\frac{M}{2}}{M!}(\theta^\frac{1}{2}\He_{M+1} + M\theta^\frac{1}{2}\He_{M-1}) - \frac{\theta f_{M-2}}{2\rho}(\theta^\frac{1}{2}\He_3 +2\theta^\frac{1}{2}\He_1) \nonumber \\
        &\quad -\frac{\theta^\frac{1}{2}f_{M-1}}{\rho}(\theta^\frac{1}{2}\He_2 +\theta^\frac{1}{2}\He_0) \nonumber \\
        &\quad - (a_M -u)\left( \frac{\theta^\frac{M}{2}}{M!}\He_M -\frac{\theta f_{M-2}}{2\rho}\He_2  - \frac{\theta^\frac{1}{2}f_{M-1}}{\rho}\He_1 \right) \nonumber \\
        &\quad -a_{M-1}\left( \frac{\theta^\frac{M-1}{2}}{(M-1)!}\He_{M-1} -\frac{\theta f_{M-3}}{2\rho}\He_2 - \frac{\theta^\frac{1}{2}f_{M-2}}{\rho}\He_1 \right) \nonumber \\
        &\quad \cdots \nonumber \\
        &\quad -a_5\left( \frac{\theta^\frac{5}{2}}{5!}\He_5 - \frac{\theta f_3}{2\rho}\He_2 - \frac{\theta^\frac{1}{2}f_4}{\rho}\He_1 \right) \nonumber \\
        &\quad -a_4\left( \frac{\theta^\frac{4}{2}}{4!}\He_4 - \frac{\theta^\frac{1}{2}f_3}{\rho}\He_1 \right) \nonumber \\
        &\quad -a_3\frac{\theta^\frac{3}{2}}{6}\He_3 -a_2\frac{\theta}{\rho}\He_2 -a_1\frac{\theta^\frac{1}{2}}{\rho}\He_1 -a_0\He_0 \nonumber \\
        &= \frac{\theta^{\frac{M+1}{2}}}{M!}\He_{M+1} - (a_M - u)\frac{\theta^\frac{M}{2}}{M!}\He_M \nonumber \\
        &\quad +\left( \frac{\theta^\frac{M+1}{2}}{(M-1)!} -a_{M-1}\frac{\theta^\frac{M-1}{2}}{(M-1)!} \right)\He_{M-1} \nonumber\\
        &\quad -\sum^{M-2}_{k=4} a_k \frac{\theta^{\frac{k}{2}}}{k!}\He_k + \left( -\frac{\theta^\frac{3}{2}f_{M-2}}{2\rho} -a_3\frac{\theta^\frac{3}{2}}{6} \right)\He_3 \nonumber \\
        &\quad +\left(-\frac{\theta f_{M-1}}{\rho} + (a_M-u)\frac{\theta f_{M-2}}{2\rho} + \sum^M_{k=6}a_{k-1}\frac{\theta f_{k-3}}{2\rho} -a_2\frac{\theta}{\rho} \right)\He_2 \nonumber \\
        &\quad + \left(-\frac{\theta^\frac{3}{2} f_{M-2}}{\rho} + (a_M -u)\frac{\theta^\frac{1}{2}f_{M-1}}{\rho} + \sum^M_{k=5}a_{k-1}\frac{\theta^\frac{1}{2}f_{k-2}}{\rho} -a_1\frac{\theta^\frac{1}{2}}{\rho}\right)\He_1 \nonumber \\
        &\quad + \left( -\frac{\theta f_{M-1}}{\rho} -a_0\right)\He_0.
    \end{align*}

    From Theorem \ref{thm:eigenvalue vectors for assoc. polynomial} we see that the characteristic polynomial is given by $p_{M+1}(x) = M!q_{M+1}(x)$.
\end{proof}

\subsubsection{Translating eigenvalues to coefficients of gradients}
Proceeding as in the RTE effort \cite{Huang-RTE3}, we can derive a relationship between the eigenvalues and the weights of the gradient based closure using Vieta's formula as well as a transformation between monomial coefficients and the Hermite polynomials.

The relation between monomials and the Hermite polynomials is \cite{NIST:DLMF}
\begin{equation}
    x^m = \sum_{k=0}^{\lfloor \frac{m}{2}\rfloor} F(m,k) \He_{m-2k}(x)
\end{equation}
where $F(m,k) = \frac{m!}{2^k k! (m-2k)!}$. We can rewrite this into an equivalent form
\begin{equation}
    x^m = \sum_{k=0}^m b_{mk}\He_k(x),\quad m\geq 0,
\end{equation}
with
\begin{equation}
    b_{mk} = \begin{cases}
        F(m,\frac{1}{2}(m-k)), &\quad \text{if } m\equiv k \pmod 2 \\
        0, &\quad \text{otherwise.} 
    \end{cases}
\end{equation}
From this formula we can expand any polynomials in terms of Hermite polynomials.
\begin{equation*}
    \sum_{i=0}^n c_i x^i = \sum_{i=0}^n c_i\left(\sum_{k=0}^i b_{ik}\He_k(x) \right) = \sum_{i=0}^n \left(\sum_{i=k}^n c_i b_{ik}\right)\He_k(x) = \sum_{k=0}^n \alpha_k \He_k(x),
\end{equation*}
where $c_i$ are polynomials coefficients and the coefficients of Hermite polynomials are determined by 
\begin{equation}\label{eq:transform-monomial-to-hermite}
    \alpha_k = \sum_{i=k}^n c_i b_{ik}.
\end{equation}

Now we proceed to derive a relationship between the eigenvalues of $A^{\text{ML}}$ in \cref{eq:ML moment system} and the last row of learned coefficients. Let $r_k = u+ \tilde{r}_k$ for $k=0,1,...,M$ and let us write the eigenpolynomial as
\begin{equation}
    p_{M+1} = c_0 +c_1(x-u) + \cdots +c_M(x-u)^M + (x-u)^{M+1}
\end{equation}
then we have that
\begin{multline}
    (x-u-\tilde{r}_0)(x-u-\tilde{r}_1)\cdots(x-u-\tilde{r}_M) \\ = c_0 +c_1(x-u) + \cdots +c_M(x-u)^M + (x-u)^{M+1}
\end{multline}
Applying Vieta's formula, we can relate the coefficients $c_k$ to sums and products of $\tilde{r}_k$ in the following manner
\begin{align*}
    \tilde{r}_0 + \tilde{r}_1 + \cdots \tilde{r}_{M-1}+\tilde{r}_M &= -c_M \nonumber \\
    (\tilde{r}_0\tilde{r}_1 + \tilde{r}_0\tilde{r}_2 +\cdots \tilde{r}_0\tilde{r}_{M}) + (\tilde{r}_1\tilde{r}_2 + \tilde{r}_1\tilde{r}_3 +\cdots +\tilde{r}_1\tilde{r}_M) +\cdots + \tilde{r}_{M-1}\tilde{r}_M &= c_{M-1} \nonumber \\
    &\quad \vdots \nonumber \\
    \tilde{r}_0\tilde{r}_1\cdots\tilde{r}_{M-1}\tilde{r}_M &= (-1)^{M+1}c_0
\end{align*}
or equivalently
\begin{equation}
    \sum_{0\leq i_1 < i_2 <\cdots <i_k \leq M}\left(\prod_{j=1}^k \tilde{r}_{i_j} = (-1)^k c_{M+1-k}\right),
\end{equation}
where the indices $i_k$ are sorted strictly increasing to ensure each product of $k$ roots is used once.

We then transform the basis $(x-u)^k$ into the Hermite basis $\He_k\left(\frac{x-u}{\theta^\frac{1}{2}}\right)$,
\begin{align*}
    &c_0 + c_1(x-u) + \cdots +c_M(x-u)^M +(x-u)^{M+1} = \beta_0\He_0\left( \frac{x-u}{\theta^\frac{1}{2}}\right)\nonumber \\
     &\qquad + \beta_1 \theta^\frac{1}{2}\He_1\left( \frac{x-u}{\theta^\frac{1}{2}}\right) + \cdots + \beta_M \theta^\frac{M}{2}\He_M\left( \frac{x-u}{\theta^\frac{1}{2}}\right) + \theta^\frac{M+1}{2}\He_{M+1}\left( \frac{x-u}{\theta^\frac{1}{2}}\right).
\end{align*}
Let $s = \frac{x-u}{\theta^\frac{1}{2}}$ then we have
\begin{align*}
    c_0 + c_1\theta^\frac{1}{2}s &+ \cdots + c_M\theta^\frac{M}{2}s^M + \theta^\frac{M+1}{2}s^{M+1} \nonumber \\
    &=\beta_0\He_0(s) +\beta_1 \theta^\frac{1}{2} \He_1 (s) + \cdots + \beta_M \theta^\frac{M}{2}\He_M(s) + \theta^\frac{M+1}{2}\He_{M+1}(s)
\end{align*}
and thus by \eqref{eq:transform-monomial-to-hermite}
\begin{equation}
    \beta_k\theta^\frac{k}{2} = \sum_{i=k}^{M+1}c_i\theta^\frac{i}{2}b_{ik}, \quad k=0,...,M+1
\end{equation}
with $c_{M+1} = 1$. These $\beta_k$ coefficients can then be transformed to entries $a_k$ of \cref{eq:moment system with ML closure} using the equations of the eigenpolynomials for $A^{ML}$ \cref{eq:eigen-polynomial-ml-2}.

\section{Training Setup and Neural Network Architecture}
\label{sec:Training Setup and Neural Network Architecture}
Our effort seeks to apply the hyperbolicity preserving deep learning closure methods developed for RTE \cite{Huang-RTE3} and apply them to the BGK model. The goal is to capture kinetic effects in the learned closure by generating high-fidelity training data solving the BGK equation via discrete velocity methods (DVM) with smooth and discontinuous initial data \cite{Han-WeinanE-2019} over a range of Knudsen numbers. During the course of the investigation, several modifications were needed to the training methods and neural network architectures originally used in RTE \cite{Huang-RTE3} to improve training errors for the BGK model. This is likely due to the strong non-linearity of the BGK model compared to RTE. We discuss the machine learning aspects of our effort.
All deep learning methods were implemented using PyTorch \cite{NEURIPS2019_9015-Pytorch}.

{  In the numerical examples, we test two sets of ML moment closures. 
The first set is to recover the HME model using neural networks. To be more specific, we train the neural network using the data generated from the HME model. The purpose is to validate the correctness of the algorithm and code.
The second is to train the neural network using the data from the kinetic model. The goal is to capture the kinetic effects in the ML closure.}

\subsection{Training Data Preparation}
\label{subsec:training data preparation}
\FloatBarrier
We generate training data following \cite{Han-WeinanE-2019}. When generating data from the HME model, we use the high order path-conservative numerical scheme in section~\ref{sec:high-order-path-conser}. 
When generating the data from the kinetic model, we use the fifth-order finite difference WENO scheme in physical space \cite{jiang1996efficient}, discrete velocity method (DVM) in velocity space \cite{broadwell1964study}, and third-order implicit-explicit method in time \cite{Ascher1997}.
In \Cref{tab:Parameters-Training-Data-Combined}, we list the various parameters for the HME and kinetic model training data. 

Following \cite{Han-WeinanE-2019}, we have two sets of training data. The first set is generated from smooth data as the initial condition, which is also called wave data. First, we generate the following macroscopic variables
\begin{equation}
\label{eq:distribution generation}
    U = \begin{cases}
        \rho(x,0) = a_\rho \sin\left( \frac{2 k_\rho \pi x}{L} +\psi_\rho \right) + b_\rho, \\
        u(x,0) = 0, \\
        \theta(x,0) = a_\theta \sin\left( \frac{2 k_\theta \pi x}{L} +\psi_\theta \right) + b_\theta,
    \end{cases}
\end{equation}
where the random variables are uniformly sampled in their associated ranges $a_z\in [0.2,0.3]$, $\psi_z \in [0,2\pi]$, $b_z \in[0.5,0.7]$ and $k_z$ is a random integer sampled from the set $\{1,2,3,4 \}$ and $z$ can be either $\rho$ or $\theta$. 
Then two local Maxwellian distributions are generated from the above macroscopic variables $U_1$ and $U_2$ and then blended together to form the wave data:
\begin{equation}
    f_{wave} = \frac{\alpha_1 f_M(v; U_1) + \alpha_2 f_M(v; U_2) }{\alpha_1+\alpha_2 + \epsilon},
\end{equation}
where $f_M$ is the Maxwellian defined in \cref{eq:f_M Maxwellian}, and $\alpha_1$ and $\alpha_2$ are uniformly random variables in $[0,1]$, with  $\epsilon = 10^{-6}$ preventing division zero.

The second set of training data is called the mixed initial conditions, which are created by taking wave data $f_{wave}$ and superpositioning with shock data $f_{shock}$. The shock data is a Riemann problem where the interval $[x_1,x_2]$ has discontinuities at its endpoints, which are uniform random variables with $x_1\in [0.2,0.4]$ and $x_2 \in [0.6,0.8]$. The primitive variables are given on left side, $\rho_L,u_L,\theta_L$, and the right side, $\rho_R,u_L,\theta_R$, with $u_R=u_L =0$, $\rho_L$ and $\theta_L$ are two independent random variables uniformly sampled from $[1,2],$ and $\rho_R$ and $\theta_R$ are two independent random variables uniformly sampled from $[0.55,0.9]$. This shock condition is blended with smooth data as follows
\begin{equation}
    f_{mix} = \alpha f_{wave} + (1-\alpha) f_{shock},
\end{equation}
where $\alpha$ is a random variable uniformly sampled from $[0.2,0.4]$.

{  Following \cite{Han-WeinanE-2019}, we generate the training data to produce distribution functions with both smooth and discontinuous initial conditions and trajectories. These are a reasonable approximation to distributions of interest for the BGK equation with no external force. This method still limits the distributions seen in training by the frequency limits on the parameter used to generate the sinusoidal data in \eqref{eq:distribution generation}. Any model with external forces or associated fields, such as magneto-hydrodynamics, would need a different method to generate training data.}

\begin{table}[ht]
\centering
\begin{tabular}{ |c|c|c| }
\hline
\textbf{Parameters} & \textbf{HME} & \textbf{Kinetic}\\
\hline
Spatial Domain $x$: & $[-0.5,-0.5]$ & $[0,1]$\\
Spatial Mesh $N_x$: & 256 & 256\\
Time Domain $t$: & $[0,10]$ & $[0,1]$ \\
Time Steps $N_t$: & $320$ & $1000$\\
\multirow{2}{*}{Knudsen Number: }& Log-Uniform & Log-Uniform \\
& $[10^{-3}, 10]$& $[10^{-3}, 10]$\\
\hline
\end{tabular}
\caption{Parameters used to generate the training data for the HME model and kinetic model.}
\label{tab:Parameters-Training-Data-Combined}
\end{table}

\FloatBarrier
\subsection{Neural Network Architecture}
\label{sec:Neural Network Architecture}
The networks we utilize have the following structure, which is depicted in Figure \ref{fig:NN Architecture}. Moments from training data are input into a fully connected, neural network. The network has a repeating structure of block layers: linear layer, activation layer, and batch normalization layer. The output of the network, which is the eigenvalues of matrix \cref{eq:Grad's moment matrix}, with the last row as \cref{eq:moment system with ML closure}. This output is then passed to an implementation of Vieta's formula which is passed to a linear transform that matches the structure required in the last row of the matrix \cref{eq:NN coefficients for closure}. It is this final output that will be used in the non-conservative solver to close the system.

\begin{figure}[ht]
    \centering
    \includegraphics[width=.9\textwidth]{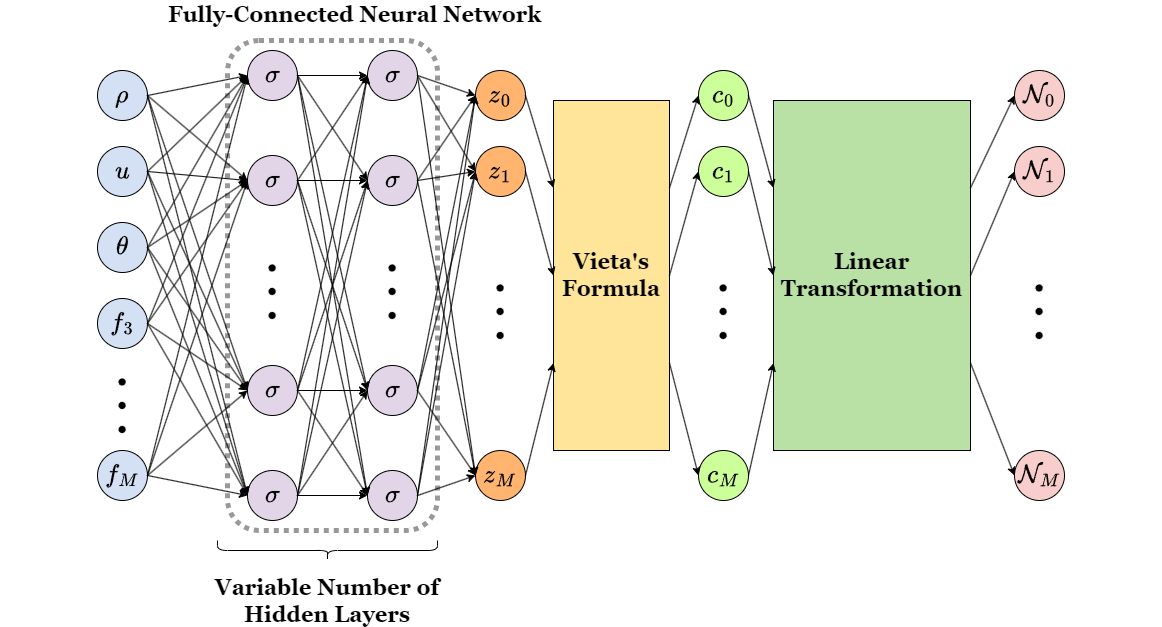}
    \caption{Neural network architecture from the left: the light blue $f_k$ are moments from the training data set. The boxed, gray circles with $\sigma$ represent the fully connected neural network, each column should be thought of as a block of layers (linear, activation, batch normalization). The yellow block applies Vieta's formula and the final green appropriately transforms the output to coefficients for the moment gradients. }
    \label{fig:NN Architecture}
\end{figure}

\FloatBarrier

\subsection{Training Considerations and Techniques}
\label{subsec:Training Considerations and Techniques}
For the training data recovering the HME closure, the training techniques and neural network structure followed that of previous RTE work \cite{Huang2022-RTE1,Huang-RTE2,Huang-RTE3}, with the modifications in Section \ref{sect:Moment Closure for the BGK Equation} for the BGK model. \Cref{tab:NN Parameters Combined} summarizes hyper-parameters for the HME closure training. 

For the kinetic training data, both the neural network architecture and training methods were modified to overcome difficulties minimizing training and testing error; specifically in the optimizer used to minimize the loss function, cycling the learning rate to avoid local minimums, and finally a change to the architecture with the inclusion of batch normalization layers to avoid vanishing and exploding gradients. We discuss each of these modifications individually. \Cref{tab:NN Parameters Combined} summarizes hyper-parameters the kinetic closure training. \Cref{tab:train test error for prediction} shows the associated training and testing error in the kinetic closure.

The loss function, $\mathcal{L}$, for training the neural network follows previous RTE work \cite{Huang2022-RTE1,Huang-RTE3}, and has the form,
\begin{equation}
\label{eq:loss-function-general-form}
    \mathcal{L} = \frac{1}{N_{data}}\sum_{j,n} | \partial_x f_{N+1}^{\text{Train}}(x_j,t_n)-\partial_x f_{N+1}^{\text{NN}}(x_j,t_n)|^2
\end{equation}
where term $\partial_x f_{N+1}^{\text{Train}}(x_j,t_n)$ is moment gradients from either the HME or kinetic training data and $\partial_x f_{N+1}^{\text{NN}} = \sum^N_{i = 0} \mathcal{N}_i(\rho, u, \theta, f_3, \cdots, f_N)\partial_x f_i$ is the neural network approximation on the associated training set, as in equation \cref{eq:NN closure}. In the HME model case, $\partial_x f_{N+1}^{\text{Train}}(x_j,t_n) = \partial_x f_{N+1}^{\text{HME}} = -f_N \partial_x u - \frac{f_{N-1}}{2} \partial_x \theta$ and is computed from the generated training data $(u, \theta, f_{N-1}, f_N)$. For the kinetic BGK case, $\partial_x f_{N+1}^{\text{Train}}(x_j,t_n) = \partial_x f_{N+1}^{\text{DVM}}$.

\begin{table}[ht]
    \centering
        \begin{tabular}{ |c | c| c| }
        \hline
        \textbf{Parameters} & \textbf{HME} & \textbf{Kinetic}\\
        \hline
        Training Data: & 100 ICs & 800 ICs\\
        Split Train/Test \%:&  80/20  & 80/20 \\
        Activation Function:& ReLU  & ReLU\\
        \hline
        Learning Rate: & Schedule: \texttt{StepLR} & Schedule: \texttt{OneCycleLR} \\
        & Rate: $10^{-7}$   &\quad Varies with count of  \\
        & Decay Factor: 0.5            &\quad moment input over epochs \\
        & Step Schedule: 100 epoch & \\
        \hline
        Optimizer:& \texttt{Adam} & \texttt{AdamW}\\
        \hline
        Loss Term: &  \multirow{2}{*}{$-f_N \partial_x u - \frac{f_{N-1}}{2} \partial_x \theta $}& \multirow{2}{*}{$ \partial_x m_{N+1}^{\text{DVM}}(x_j,t_n) $}  \\
           $\partial_x m_{N+1}^{\text{Train}}(x_j,t_n)$    &            &      \\    
        \hline
        \end{tabular}
        \caption{Summary of neural networks training parameters for both HME and kinetic closures.}
        \label{tab:NN Parameters Combined}
    \end{table}

\begin{table}[h!]
\centering
\begin{tabular}{ |c|c |c |c|  }
\hline
Moments  & Loss & Relative $L^2$ Error & Relative $L^2$ Error  \\
in Input &     & (Train)               &(Test) \\
\hline
5 & 1.49E-06 & 1.72E-01 & 1.76E-01   \\
\hline
6 & 4.38E-08 & 1.02E-01  & 1.05E-01  \\
\hline
7 & 1.22E-09 & 6.42E-02 & 6.55E-02 \\
\hline
8 & 1.56E-10 & 9.46E-02  & 9.52E-02  \\
\hline
\end{tabular}
\caption{Loss and errors, both in training and testing, for neural network of 9 layers, 128 neurons wide, and ReLU activation, at the end of training with 500 epoch on kinetic smooth data.}
\label{tab:train test error for prediction}
\end{table}

\FloatBarrier

{  Achieving good training errors for HME data was straight forward. However, several modifications were needed to achieve results with the kinetic data. These modifications are discussed in the following subsections. We outline the general training process in \Cref{algo:train_closure} for both HME and kinetic training data.}

\begin{algorithm}
\LinesNumbered
\KwData{The $M+1$ moments in primitive variables $\left(\rho, u, \theta, f_3, f_4, \dots, f_M, f_{M+1} \right)$ over the training time interval for every initial condition.}
\KwIn{A set of $M$ moments in primitive variables $\left(\rho, u, \theta, f_3, f_4, \dots, f_M \right)$.}
\KwOut{Coefficients on the gradient of the $M$ moments, $\mathcal{N}_0, \mathcal{N}_1, \dots, \mathcal{N}_M$}
\Begin(Prepare Training Set){
    \For{all time steps within training time range, in all training data}{
        {Load: $M+1$ moments, $f_{M+1}$ from training data} \\
        \nl Compute: $M+1$ moment gradients, $\partial_x f_{M+1}$ from training data
        }
    \Return{Prepared training set, $W^{Training}$}
}
\Begin(Training){
    \While{under number of training epoch}{
        { Sample $M$ moments, $M$ gradients and $\partial_x f_{M+1}$ from $ W^{Training}$}
        \BlankLine
        {\emph{ // Neural network defined in \Cref{sec:Neural Network Architecture}}} \\
        \nl{Evaluate Neural Network: $\mathcal{NN}(\rho, u, \theta, f_3, f_4, \dots, f_M) = \left(\mathcal{N}_0, \mathcal{N}_1, \dots, \mathcal{N}_M\right)$} \\
        \nl{Compute: $\partial_x f_{M+1}^{\text{NN}} = \sum^M_{i = 0} \mathcal{N}_i(\rho, u, \theta, f_{3}, \cdots, f_{M})\partial_x f_{i}$} \\
        \nl{Compute Loss: $\mathcal{L}(\partial_x f_{M+1}^{\text{NN}},\partial_x f_{M+1})$ }\\
        \nl{Back-propagation to optimize} 
        }
    }
\caption{{\sc {  Train Closure Model}}}
\label{algo:train_closure}
\end{algorithm}

\FloatBarrier
\subsubsection{\texttt{Adam} vs. \texttt{AdamW}}
The RTE work \cite{Huang2022-RTE1,Huang-RTE2,Huang-RTE3} used the \texttt{Adam} \cite{Adam} optimizer in PyTorch. This optimizer uses adaptive estimates for lower-order moments (mean, variance) of the gradient via exponential averaging and applying bias corrections.

The \texttt{AdamW}\cite{AdamW} optimizer, which has been also been used in recent moment closure work\cite{Li2023}, modifies the regularization that was implemented in \texttt{Adam}. In Loshchilov and Hutter\cite{AdamW}, it was argued that weight decay and $L^2$ regularization are not equivalent for adaptive gradient descent methods but are equivalent for stochastic gradient descent. \texttt{AdamW} separates moment estimates from the decay of the parameters, decoupling the optimization of the loss function from the weight decay. Empirically, \texttt{AdamW} has shown improved performance over wider array of hyper-parameter selection. 

By using \texttt{AdamW} over \texttt{Adam} for training our neural network closure, we were able to improve overall training error significantly as shown in \cref{tab:Adam vs AdamW}. {  This was needed for the kinetic training data.}

\begin{table}[h!]
\centering
\begin{tabular}{ |c|c |c | }
\hline
Moments  & \texttt{Adam} Relative         & \texttt{AdamW} Relative     \\
in Input & $L^2$ Error (Train)   & $L^2$ Error (Train) \\
\hline
6 & 1.23E-01 & 1.02E-01   \\
\hline
7 & 1.17E-01 & 6.42E-02 \\
\hline
8 & 1.06E-01 & 9.46E-02  \\
\hline
\end{tabular}
\caption{Comparison of relative training error using the \texttt{Adam} and \texttt{AdamW} optimizer in PyTorch for a 9 layer, 128 neuron wide, ReLU activated neural network with batch normalization. {  This is for training a NN closure on kinetic data and shows the improvement of switching optimizers.}}
\label{tab:Adam vs AdamW}
\end{table}

\FloatBarrier

\subsubsection{Adding Batch Normalization}
\FloatBarrier
To further improve training errors in the neural network closure, a batch normalization layer was added in the hidden layers between the linear and activation layers. Batch normalization \cite{BatchNormalization} is a commonly used method in deep learning. This process normalizes the batch input to the layer to have mean-zero, variance-one by tracking running mean and variance over the course of training. In \cref{fig:NN Architecture} the normalization layers can be thought of being contained in the layers with $\sigma$. Adding the batch normalization layer made optimal learning rate ranges more uniform across the number of moments input, as seen in \cref{fig:LRRTsub} as well as created a large region of decreasing loss, that stays relatively consistent over changing the number of moments that are input to the network. Also, this batch normalization layer also improves training errors during training.

\subsubsection{Learning Rate Cycling}
During training, there was concern that the neural network closure was settling in a local minimum rather than finding a global minimum. The initial implementation followed the RTE work \cite{Huang-RTE3} with a halving schedule for the learning rate hyper-parameter over a fixed number of epochs.

For the BGK neural network closure, we implemented a one-cycle learning rate schedule where the learning rate is raised and then lowered in a cycle following a cosine function \cite{OneCycle}. The learning rate changes occurs over each iteration of the optimizer, that is after each batch of training data rather than over epoch, which would be once through the entire training data set. A sample one-cycle learning rate schedule is show in \cref{fig:OCLRsub}.

This method requires finding optimal maximum learning rates that do not cause the training to diverge in an unrecoverable way from the minimum. This is done using the so called learning rate range test (LRRT) \cite{Smith2015-cyclic,Goodfellow-et-al-2016}. This process is useful for finding reasonable learning rate values for training in general. The LRRT varies the learning rate every training iteration over a range, orders of magnitude, of learning rates and records the loss. Usually revealed is a plateau where the learning rate is too small to approach the minimum and loss is nearly constant, followed by a region of decreasing loss that ends with a steep jump in loss as the learning rate becomes too large to resolve the minimum. By choosing a maximum learning rate that occurs at the end of the decreasing loss region and a minimum learning rate that occurs before the loss region, using a one cycle learning rate schedule.

There are draw backs to using this method. The LRRT must be done prior to training a given neural network with a given hyper parameter configuration. Also, if the maximum learning rate is chosen too far away from the end of the decrease, we have observed non-recoverable divergence away from a global minimum. However, this method does give a relatively good indication of where optimal learning rates are located.

\begin{figure}
    \subfloat[\label{fig:LRRTsub}]{
    \includegraphics[width=0.565\textwidth,valign=c]{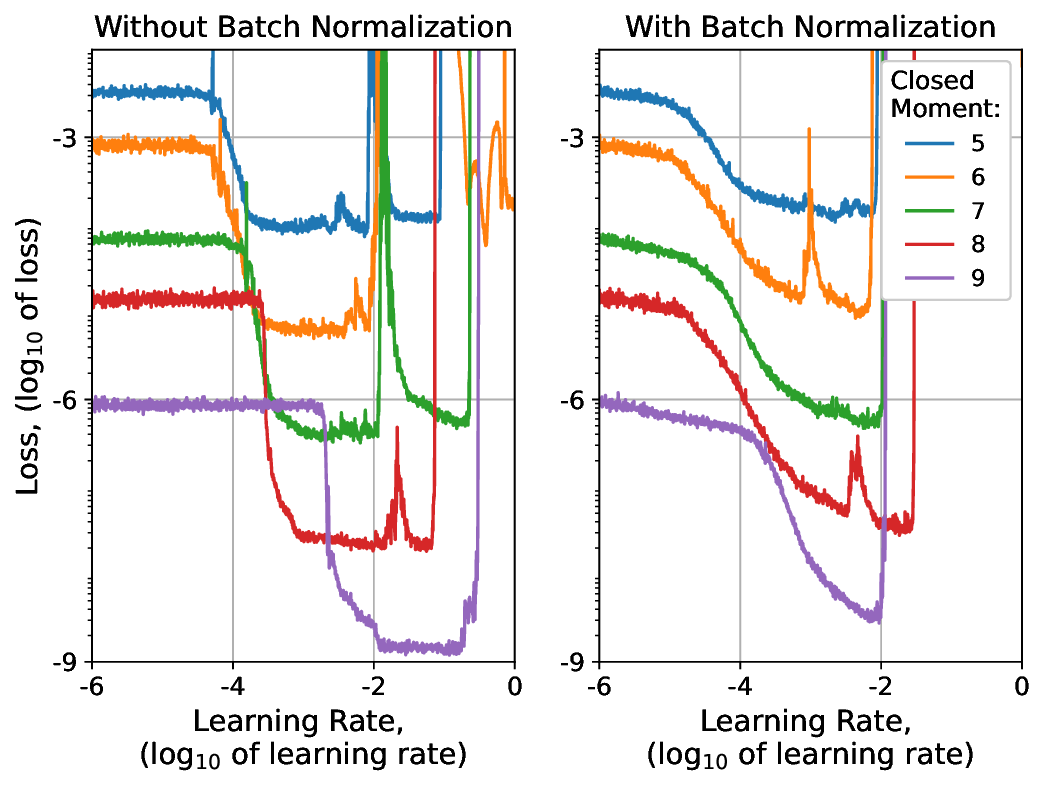}
    }
    \subfloat[\label{fig:OCLRsub}]{
    \includegraphics[width=0.4\textwidth,valign=c]{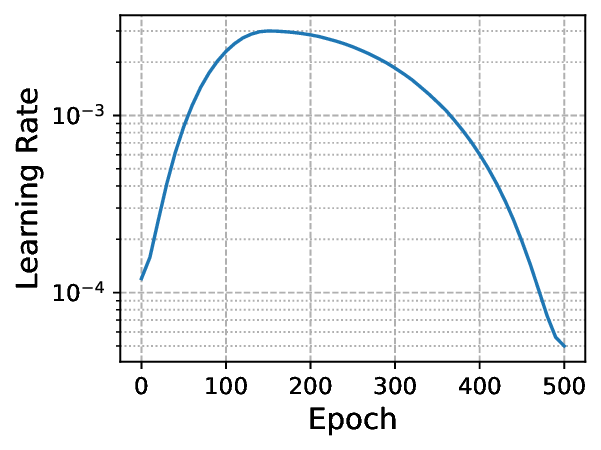}
    }
    \caption{Examples of learning rate range tests, with the effects of batch normalization, and a sample one-cycle learning rate schedule. \ref{fig:LRRTsub} Representative learning rate range for a 9 layer, 256 neuron wide network that is trained on kinetic smooth/wave initial conditions. The horizontal axes are the learning rate and the vertical axes are the loss. Each color represents the moment that is being closed by the network. On the left, the networks have no batch normalization layer, and on the right the networks do. We can see that batch normalization (right) gave overlapping regions of loss decrease that were spread over a wider range of learning rates compared to networks without. \ref{fig:OCLRsub} An example of the one cycle learning rate variation. The horizontal axis is the epoch of training and the vertical axis is the learning rate.}
\end{figure}

\FloatBarrier

\section{Numerical Method} 
\label{sec:Numerical Method}

In this section, we briefly review the first-order and high-order path-conservative numerical methods \cite{castro2017well} for solving the non-conservative hyperbolic moment system. We will show the formulation of the path conservative method. It is important to note we make use of a partially conservative transformation. This change of variables places the first $M$-th moments in conservative form, and the last moment in a non-conservative form as demonstrated in \cite{koellermeier2017numerical,koellermeier2021high}. The transformation to partially conserved variables greatly improves the performance of the path-conservative method.

For simplicity, we adopt a uniform spatial discretization of domain $[x_a, x_b]$,
$$
x_a = x_{\frac{1}{2}} < x_{\frac{3}{2}} \cdots < x_{N_x-\frac{1}{2}} < x_{N_x+\frac{1}{2}} = x_b, 
$$
where $x_{j-\frac{1}{2}} = x_a + j\Delta x, j = 1, 2, \cdots, N_x+1$ with mesh size $\Delta x = \frac{x_b-x_a}{N_x}$. The cells and the mid-points are denoted by
$$
I_j = [x_{j-\frac{1}{2}}, x_{j+\frac{1}{2}}], \quad x_j = \frac{1}{2}(x_{j-\frac{1}{2}} + x_{j+\frac{1}{2}}), \quad j = 1, 2, \cdots N_x.
$$
We denote $\overline{\mathbf{w}}^n_j = \frac{1}{\Delta x}\int^{x_{j+\frac{1}{2}}}_{x_{j-\frac{1}{2}}} \mathbf{w}(x, t^n)\ dx$ as the cell average value of $\mathbf{w}(x, t)$ at time $t^n = n\Delta t$ with $\Delta t$ being the time stepping size. 

\subsection{First order path-conservative scheme}

\label{sec:first-order-path-conser}

In the case of systems of hyperbolic conservation laws, when $A = \frac{\partial F}{\partial \mathbf{w}}$ is Jacobian matrix of a flux function $F(\mathbf{w})$, \cref{eq:primitive moment system matrix vector} can be written as
\begin{equation}
\label{eq:con-hcl}
\frac{\partial \mathbf{w}}{\partial t} + \frac{\partial F(\mathbf{w})}{\partial x} = 0 .   
\end{equation}
Then a first order conservative wave propagation method \cite{leveque1997wave} can be applied to \cref{eq:con-hcl},
\begin{equation} 
\label{eq:con-hcl-wp}
\overline{\mathbf{w}}^{n+1}_j = \overline{\mathbf{w}}^n_j - \frac{\Delta t}{\Delta x}\left(\mathcal{A}^+_{j-\frac{1}{2}}\Delta \mathbf{w}^n_{j-\frac{1}{2}} + \mathcal{A}^-_{j+\frac{1}{2}}\Delta \mathbf{w}^n_{j+\frac{1}{2}}\right) 
\end{equation}
with jump $\Delta \mathbf{w}^n_{j\pm\frac{1}{2}} = \mathbf{w}^{n, +}_{j\pm\frac{1}{2}} - \mathbf{w}^{n, -}_{j\pm\frac{1}{2}}$. Here $\mathbf{w}^{n, \pm}_{j\pm\frac{1}{2}}$ are the left and right limits of solution $\mathbf{w}^n$ at the cell boundaries $x_{j\pm\frac{1}{2}}$. In particular, $\mathbf{w}^{n, +}_{j-\frac{1}{2}}$ can be approximated using the cell average sequence from the neighboring cells $\{\overline{\mathbf{w}}^n_{j-q}, \cdots \overline{\mathbf{w}}^n_{j-p}\}$ for $q, p \geq 0$. In \cref{eq:con-hcl-wp}, it is sufficient for us to achieve first order accuracy taking $\mathbf{w}^{n, +}_{j-\frac{1}{2}} = \overline{\mathbf{w}}_j$ and $\mathbf{w}^{n, -}_{j-\frac{1}{2}} = \overline{\mathbf{w}}_{j-1}$, thus
$\Delta \mathbf{w}^n_{j-\frac{1}{2}} = \overline{\mathbf{w}}^n_{j} - \overline{\mathbf{w}}^n_{j-1}$ and $\Delta \mathbf{w}^n_{j+\frac{1}{2}} = \overline{\mathbf{w}}^n_{j+1} - \overline{\mathbf{w}}^n_{j}$ in \cref{eq:con-hcl-wp}. 

When $F(\mathbf{w}) = A\mathbf{w}$ is a linear function, $\mathcal{A} = A$, we have $A^{\pm}_{j\pm\frac{1}{2}}$ in \cref{eq:con-hcl-wp} defined by
\begin{equation}
A^+_{j\pm\frac{1}{2}} = R_{j\pm\frac{1}{2}}\Lambda^+_{j\pm\frac{1}{2}}R^{-1}_{j\pm\frac{1}{2}}, \quad	A^-_{j\pm\frac{1}{2}} = R_{j\pm\frac{1}{2}}\Lambda^-_{j\pm\frac{1}{2}}R^{-1}_{j\pm\frac{1}{2}}
\end{equation}
where $R_{j\pm\frac{1}{2}}$ is a $(M+1)\times (M+1)$ matrix storing the right eigenvectors of $A_{j\pm\frac{1}{2}}$, $\Lambda^{\pm}_{j\pm\frac{1}{2}} = 
\text{diag}\left((\lambda^1_{j+\frac{1}{2}})^{\pm}, (\lambda^2_{j+\frac{1}{2}})^{\pm}, \dots (\lambda^{M+1}_{j+\frac{1}{2}})^{\pm}\right)
$ with positive or negative eigenvalues of $A_{j\pm\frac{1}{2}}$ respectively. 
When $F(\mathbf{w})$ is nonlinear, the system shock speed at a discontinuity is given by Rankine-Hugoniot conditions.

For $A$ in \cref{eq:Grad's moment matrix,eq:moment system with ML closure} can not be expressed as a Jacobian matrix, we follow the definition of the non-conservative product $A(\mathbf{w})\mathbf{w}_x$ in \cite{dal1995definition}. For the left and right limits $\mathbf{w}^-, \mathbf{w}^+$ of $\mathbf{w}\in \Omega$ at the discontinuity, consider a family of Lipschitz continuous path $\phi(s; \mathbf{w}^-, \mathbf{w}^+): [0, 1]\times \Omega \times \Omega \rightarrow \Omega$ such that
$$
\phi(0; \mathbf{w}^-, \mathbf{w}^+) = \mathbf{w}^-, \quad \phi(1; \mathbf{w}^-, \mathbf{w}^+) = \mathbf{w}^+, \quad \phi(s; \mathbf{w}, \mathbf{w}) = \mathbf{w}
$$
where $\phi(s; \mathbf{w}^-, \mathbf{w}^+)$ can be seen as a parameterization of the integral curve connecting $\mathbf{w}^-, \mathbf{w}^+$. In \cite{koellermeier2017numerical}, there exist many different choices of paths $\phi$ connecting $W_L$ and $W_R$, such as a simple linear path
\begin{equation} \label{eq:path1}
\phi(s; \mathbf{w}^-, \mathbf{w}^+) = \mathbf{w}^- + s\cdot (\mathbf{w}^+ - \mathbf{w}^-), \quad s\in [0, 1];
\end{equation}
and polynomial paths $\phi^N_-$ and $\phi^N_+$:
\begin{equation} \label{eq:path2}
\phi^N_-(s; \mathbf{w}^-, \mathbf{w}^+) = \mathbf{w}^- + s^N\cdot (\mathbf{w}^+ - \mathbf{w}^-), \quad \phi^N_+(s; \mathbf{w}^-, \mathbf{w}^+) = \mathbf{w}^+ + (s-1)^N\cdot (\mathbf{w}^- - \mathbf{w}^+).
\end{equation}
In \cite{castro2017well}, given a family of paths $\phi$, a numerical scheme is said to be $\phi$-conservative if it can be written under the form
\begin{equation} \label{eq:path-conser}
\overline{\mathbf{w}}^{n+1}_j
= 
\overline{\mathbf{w}}^n_j 
- \frac{\Delta t}{\Delta x}\left(D^+_{j-\frac{1}{2}} + D^-_{j+\frac{1}{2}}\right), 
\end{equation}
where
$D^{\pm}_{j\pm\frac{1}{2}} = D^{\pm}(\mathbf{w}^{n, -}_{j\pm\frac{1}{2}}, \mathbf{w}^{n, +}_{j\pm\frac{1}{2}})$
with $D^-$ and $D^+$ being two continuous functions satisfying
\begin{equation} \label{eq:path-conser-consistency}
D^{\pm}(\mathbf{w}, \mathbf{w}) = 0, \quad \forall \mathbf{w}\in \Omega,
\end{equation}
and
\begin{multline} \label{eq:path-conser-jump}
D^-(\mathbf{w}^{n, -}_{j\pm\frac{1}{2}}, \mathbf{w}^{n, +}_{j\pm\frac{1}{2}}) 
+ 
D^+(\mathbf{w}^{n, -}_{j\pm\frac{1}{2}}, \mathbf{w}^{n, +}_{j\pm\frac{1}{2}}) 
= \\
\int^1_0 A(\phi(s; \mathbf{w}^{n, -}_{j\pm\frac{1}{2}}, \mathbf{w}^{n, +}_{j\pm\frac{1}{2}}))\frac{\partial \phi}{\partial s}(s; \mathbf{w}^{n, -}_{j\pm\frac{1}{2}}, \mathbf{w}^{n, +}_{j\pm\frac{1}{2}})\ ds.
\end{multline}
Consider a Roe linearization $\mathcal{A}_{\phi}(\mathbf{w}^-_{j\pm\frac{1}{2}}, \mathbf{w}^+_{j\pm\frac{1}{2}})$ satisfying
\begin{equation} \label{eq:groe}
\mathcal{A}_{\phi}\left(\mathbf{w}^-_{j\pm\frac{1}{2}}, \mathbf{w}^+_{j\pm\frac{1}{2}}\right)
\Delta \mathbf{w}_{j\pm\frac{1}{2}}
= \int^1_0 A(\phi(s; \mathbf{w}^-_{j\pm\frac{1}{2}}, \mathbf{w}^+_{j\pm\frac{1}{2}}))
\frac{\partial \phi}{\partial s}(s; \mathbf{w}^-_{j\pm\frac{1}{2}}, \mathbf{w}^+_{j\pm\frac{1}{2}}))\ ds.
\end{equation}
and let 
$
D^{\pm}(\mathbf{w}^{n, -}_{j\pm\frac{1}{2}}, \mathbf{w}^{n, +}_{j\pm\frac{1}{2}}) =  \mathcal{A}^{\pm}_{\phi}
\Delta \mathbf{w}_{j\pm\frac{1}{2}},
$
the first order path-conservative scheme for \cref{eq:primitive moment system matrix vector} is given as
\begin{equation} \label{eq:path-conser-roe}
\overline{\mathbf{w}}^{n+1}_j = \overline{\mathbf{w}}^n_j - 
\frac{\Delta t}{\Delta x}\left(\mathcal{A}^+_{\phi}\Delta \mathbf{w}_{j-\frac{1}{2}} + \mathcal{A}^-_{\phi}\Delta \mathbf{w}_{j+\frac{1}{2}}\right).
\end{equation}
When using \cref{eq:path1}, \cref{eq:groe} reduces
\begin{equation} \label{eq:groe2}
\mathcal{A}_{\phi}\left(\mathbf{w}^-_{j\pm\frac{1}{2}}, \mathbf{w}^+_{j\pm\frac{1}{2}}\right) 
= 
\int^1_0 A(\phi(s; \mathbf{w}^-_{j\pm\frac{1}{2}}, \mathbf{w}^+_{j\pm\frac{1}{2}}))\ ds,
\end{equation}
which could be computed via
\begin{equation} \label{eq:groe4}
\mathcal{A}_{\phi}\left(\mathbf{w}^-_{j\pm\frac{1}{2}}, \mathbf{w}^+_{j\pm\frac{1}{2}}\right) \approx \sum^p_{i = 1} \omega_iA(\phi(s_i, \mathbf{w}^-_{j\pm\frac{1}{2}}, \mathbf{w}^+_{j\pm\frac{1}{2}}))
\end{equation}
using high order quadrature rule with points $\{s_i\}$ and the corresponding weights $\{\omega_i\}$. 
When using \cref{eq:path2}, $\mathcal{A}_{\phi}$ can be computed as
\begin{align} \label{eq:groe3}
\mathcal{A}_{\phi}\left(\mathbf{w}^-_{j\pm\frac{1}{2}}, \mathbf{w}^+_{j\pm\frac{1}{2}}\right) 
        &= \int^1_0 A(\phi(s; \mathbf{w}^-_{j\pm\frac{1}{2}}, \mathbf{w}^+_{j\pm\frac{1}{2}}))\cdot n \cdot s^{n-1}\ ds \\
		&\approx \sum^p_{i = 1} \omega_iA(\phi(s_i; \mathbf{w}^-_{j\pm\frac{1}{2}}, \mathbf{w}^+_{j\pm\frac{1}{2}}))\cdot n \cdot s^{n-1}_i
\end{align}
via high order quadrature rules.

\subsection{High order path-conservative scheme}

\label{sec:high-order-path-conser}

We again start from \cref{eq:con-hcl}, a conservative semi-discrete method is given as
\begin{equation} \label{eq:high-order-con}
\mathbf{w}'_j(t) = -\frac{1}{\Delta x}\left(G_{j+\frac{1}{2}}(\mathbf{w}^-_{j+\frac{1}{2}}(t), \mathbf{w}^+_{j+\frac{1}{2}}(t)) - G_{j-\frac{1}{2}}(\mathbf{w}^-_{j-\frac{1}{2}}(t), \mathbf{w}^+_{j-\frac{1}{2}}(t))\right)
\end{equation}
with the numerical flux
\begin{subequations}
\label{eq:high-order-con-flux}
\begin{align} 
G_{j\pm\frac{1}{2}}(\mathbf{w}^-_{j\pm\frac{1}{2}}(t), \mathbf{w}^+_{j\pm\frac{1}{2}}(t)) 
    &= F(\mathbf{w}^+_{j\pm\frac{1}{2}}(t)) - \mathcal{A}^+_{j\pm\frac{1}{2}}
    \Delta \mathbf{w}_{j\pm\frac{1}{2}}(t)
    \label{eq:high-order-con-flux1}\\
	&= F(\mathbf{w}^-_{j\pm\frac{1}{2}}(t)) + \mathcal{A}^-_{j\pm\frac{1}{2}}
    \Delta \mathbf{w}_{j\pm\frac{1}{2}}(t)
    \label{eq:high-order-con-flux2}.
\end{align}
\end{subequations}
Note that \cref{eq:high-order-con-flux1,eq:high-order-con-flux2} are obtained from
$$
F(\mathbf{w}^+_{j\pm\frac{1}{2}}(t)) - F(\mathbf{w}^-_{j\pm\frac{1}{2}}(t)) 
= \mathcal{A}_{j\pm\frac{1}{2}}
\Delta \mathbf{w}_{j\pm\frac{1}{2}}(t)
.
$$
Plug \cref{eq:high-order-con-flux} in 
\cref{eq:high-order-con}, we have
\begin{subequations}
\label{eq:high-order-path-conser}
\begin{align} 
\mathbf{w}'_j(t) 
    &= -\frac{1}{\Delta x}\left[\mathcal{A}^+_{j-\frac{1}{2}}
    \Delta \mathbf{w}_{j-\frac{1}{2}}(t)
		+ \mathcal{A}^-_{j+\frac{1}{2}}
        \Delta \mathbf{w}_{j+\frac{1}{2}}(t)
		+ F(\mathbf{w}^-_{j-\frac{1}{2}}(t)) - F(\mathbf{w}^+_{j-\frac{1}{2}}(t)) \right] \label{eq:high-order-path-conser1}\\
	&= -\frac{1}{\Delta x}\left[\mathcal{A}^+_{j-\frac{1}{2}}
    \Delta \mathbf{w}_{j-\frac{1}{2}}(t)
		+ \mathcal{A}^-_{j+\frac{1}{2}}
        \Delta \mathbf{w}_{j+\frac{1}{2}}(t)
		+ \int^{x_{j+\frac{1}{2}}}_{x_{j-\frac{1}{2}}} A(P_j(x))\frac{d}{dx} P_j(x)\ dx\right]\label{eq:high-order-path-conser2}
\end{align}
\end{subequations}
with smooth function $P_j(x)$ on cell $I_j$ satisfying
\begin{equation}
\lim_{x\rightarrow x^+_{j-\frac{1}{2}}} P_j(x) = \mathbf{w}^+_{j-\frac{1}{2}}(t)	\quad	
\lim_{x\rightarrow x^-_{j+\frac{1}{2}}} P_j(x) = 
\mathbf{w}^-_{j+\frac{1}{2}}(t).
\end{equation}
\cref{eq:high-order-path-conser} leads to a $\phi$-conservative scheme for \cref{eq:primitive moment system matrix vector}
\begin{equation} \label{eq:high-order-path-conser-phi}
\mathbf{w}'_j(t) = -\frac{1}{\Delta x}\left[\mathcal{A}^+_{\phi}
    \Delta \mathbf{w}_{j-\frac{1}{2}}(t)
		+ \mathcal{A}^-_{\phi}
        \Delta \mathbf{w}_{j+\frac{1}{2}}(t)
		+ \int^{x_{j+\frac{1}{2}}}_{x_{j-\frac{1}{2}}} A(P_j(x))\frac{d}{dx} P_j(x)\ dx\right]
\end{equation}
which can be written as 
$$
\mathbf{w}'_j(t)
= 
- \frac{1}{\Delta x}\left(D^+_{j-\frac{1}{2}} + D^-_{j+\frac{1}{2}}\right) 
$$
with
\begin{align}
D^+_{j-\frac{1}{2}} 
    &= \mathcal{A}^+_{\phi}
    \Delta \mathbf{w}_{j-\frac{1}{2}}(t)
    + \int^{x_j}_{x_{j-\frac{1}{2}}} A(P_j(x))\frac{d}{dx} P_j(x)\ dx \\
D^-_{j+\frac{1}{2}} 
    &= \mathcal{A}^-_{\phi}
    \Delta \mathbf{w}_{j+\frac{1}{2}}(t)
    + \int^{x_{j+\frac{1}{2}}}_{x_j} A(P_j(x))\frac{d}{dx} P_j(x)\ dx 
\end{align}
and
\begin{align*}
D^-_{j+\frac{1}{2}} + D^+_{j+\frac{1}{2}} 
    &= \int^{x_{j+\frac{1}{2}}}_{x_j} A(P_j(x))\frac{d}{dx} P_j(x)\ dx  \\
	&+ \mathcal{A}_{\phi}
    \Delta \mathbf{w}_{j+\frac{1}{2}}(t)    
    \\
	&+ \int^{x_{j+1}}_{x_{j+\frac{1}{2}}} A(P_{j+1}(x))\frac{d}{dx} P_{j+1}(x)\ dx \\
	&= \int^{x_{j+\frac{1}{2}}}_{x_j} A(P_j(x))\frac{d}{dx} P_j(x)\ dx  \\
    &+ \int^1_0 A(\phi(s; \mathbf{w}^-_{j+\frac{1}{2}}(t), \mathbf{w}^+_{j+\frac{1}{2}}(t)))\frac{\partial \phi}{\partial s}(s; \mathbf{w}^-_{j+\frac{1}{2}}(t), \mathbf{w}^+_{j+\frac{1}{2}}(t))\ ds  \\
	&+ \int^{x_{j+1}}_{x_{j+\frac{1}{2}}} A(P_{j+1}(x))\frac{d}{dx} P_{j+1}(x)\ dx.
\end{align*}
In our implementation, the integral term $\int^{x_{j+\frac{1}{2}}}_{x_{j-\frac{1}{2}}} A(P_j(x))\frac{d}{dx} P_j(x)\ dx$ in \cref{eq:high-order-path-conser-phi} is approximated via a sixth order Gauss Lobatto quadrature rule
\begin{equation}
\int^{x_{j+\frac{1}{2}}}_{x_{j-\frac{1}{2}}} A(P_j(x))\frac{d}{dx} P_j(x)\ dx 
\approx 
\sum^4_{i = 1} w_i A(P_j(s_i\Delta x + x_j))\frac{d}{dx} P_j(s_i\Delta x + x_j).
\end{equation}
with quadrature nodes $\{s_i\}^4_{i=1} \in [-\frac{1}{2}, \frac{1}{2}]$ and their associated weights $\{w_i\}^4_{i=1}$. More specifically, the approximated point values 
$\{\mathbf{w}^+_{j-\frac{1}{2}} = P_j(x_{j-\frac{1}{2}}) = P_j(s_1\Delta x + x_j), \\ P_j(s_2\Delta x + x_j), P_j(s_3\Delta x + x_j), \mathbf{w}^-_{j+\frac{1}{2}} = P_j(x^-_{j+\frac{1}{2}}) = P_j(s_4\Delta x + x_j)\}$ 
over $I_j$ can be reconstructed using a fifth order accurate WENO method~\cite{shu1988efficient} from the cell averages $\{\overline{\mathbf{w}}_i, i = j-2, j,  j+2\}$. $\{\frac{d}{dx} P_j(x)| x = s_i\Delta x + x_j, i = 1, \cdots 4\}$ can then be approximated by the derivative of polynomial interpolating through $\{P_j(x)| x = s_i\Delta x + x_j, i = 1, \cdots 4\}$ over $I_j$.

Note that \cref{eq:high-order-path-conser-phi} becomes a system of ODEs after the spatial discretization to the RHS. For the time integration of \cref{eq:high-order-path-conser-phi} on the LHS, we use a standard third-order SSP Runge-Kutta method \cite{gottlieb2001strong}.

\section{Numerical Results}
\label{sec:Numerical Results}
In this section we present numerical calculations using the neural network closures, for both HME trained and kinetic trained, for the moment system and associated errors.

{  For the kinetic trained closure, we are making an assumption on the form of the closure which is an inherent model error. For HME trained closure, the closure model is exact so the only error present is related to the machine learning errors of training and stochastic randomness associated with it. The purpose of the presenting the HME results is to demonstrate that the machine learning approach to the closure problem is viable.}

{  An outline of process to compute solutions using the trained neural network closure is given in \Cref{algo:numerical_experiment}.}

\begin{algorithm}
\LinesNumbered
\DontPrintSemicolon
\KwIn{\\
\quad Moments in primitive variables: $\mathbf{w}^0 =\left(\rho^0, u^0, \theta^0, f_3^0, f_4^0, \dots, f_M^0 \right)$ at $t=0$.\\ 
\quad Trained Neural Network Closure: $\mathcal{NN}$}
\KwOut{Moments in primitive variables: $ \mathbf{w}^{T_{final}} = \left(\rho, u, \theta, f_3, f_4, \dots, f_M \right)$ at $t=T_{final}$.}
\setcounter{AlgoLine}{0}
{Form $A^{\text{ML}}$ with last row $ \gets \mathcal{NN}(\mathbf{w}^0) = \begin{pmatrix}
    a_0 & a_1 & a_2 & a_3 & \cdots & a_{M-1} & a_{M}
\end{pmatrix}$ \Cref{eq:moment system with ML closure} }\\
\nl {$\mathbf{w}^* \gets \mathbf{w}^0$} \\
\nl \While{$t < T_{final}$}{
\emph{ // First Stage}\\
\nl Compute: $A^{\text{ML}}$ using $\mathcal{NN}(\mathbf{w}^*)$ for last row.\\
\nl Compute: maximum wave speeds and CFL from $A^{\text{ML}}$\\
\nl Compute: $\texttt{rhs}^1$, the right-hand side of $\frac{\partial{\mathbf{w}}}{\partial t} = - A^{\text{ML}}\frac{\partial{\mathbf{w}}}{\partial x} + \textbf{Q} $ for $\mathbf{w}^*$ using non-conservative methods in \Cref{sec:Numerical Method}.\\
\nl$\mathbf{w}^1 \gets \mathbf{w^*} - \Delta t \cdot \mathfrak{a}_{1,0}\cdot \texttt{rhs}^1$\\
\nl$t \gets t + \mathfrak{c}_1 \cdot \Delta t$\BlankLine
\emph{// Second stage} \\
\nl Compute: $A^{\text{ML}}$ using $\mathcal{NN}(\mathbf{w}^1)$ for last row.\\
\nl Compute: maximum wave speeds and CFL from $A^{\text{ML}}$ \\
\nl Compute: $\texttt{rhs}^2$, the right-hand side of $\frac{\partial{\mathbf{w}}}{\partial t} = - A^{\text{ML}}\frac{\partial{\mathbf{w}}}{\partial x} + \textbf{Q} $ for $\mathbf{w}^1$ using non-conservative methods in \Cref{sec:Numerical Method}.\\
\nl$\mathbf{w}^2 \gets \mathbf{w^*} - \Delta t \cdot \left( \mathfrak{a}_{2,0}\cdot \texttt{rhs}^1 +  \mathfrak{a}_{2,1}\cdot \texttt{rhs}^2 \right)$\\
\nl$t \gets t + \mathfrak{c}_2 \cdot \Delta t$\BlankLine
\emph{// Third stage} \\
\nl Compute: $A^{\text{ML}}$ using $\mathcal{NN}(\mathbf{w}^2)$ for last row.\\
\nl Compute: maximum wave speeds and CFL from $A^{\text{ML}}$ \\
\nl Compute: $\texttt{rhs}^3$, the right-hand side of $\frac{\partial{\mathbf{w}}}{\partial t} = - A^{\text{ML}}\frac{\partial{\mathbf{w}}}{\partial x} + \textbf{Q} $ for $\mathbf{w}^2$ using non-conservative methods in \Cref{sec:Numerical Method}.\\
\nl$\mathbf{w}^3 \gets \mathbf{w^*} - \Delta t \cdot \left( \mathfrak{b}_{0}\cdot \texttt{rhs}^1 +  \mathfrak{b}_{1}\cdot \texttt{rhs}^2 +  \mathfrak{b}_{2}\cdot \texttt{rhs}^3\right)$\\
\nl$t\gets t+\Delta t$ \\
\nl$\mathbf{w}^* \gets \mathbf{w}^3$
}
\caption{{\sc {  Computing with Trained Closure Model}} \\ 
{  Solve the non-conservative moment system $$ \frac{\partial{\mathbf{w}}}{\partial t} + A^{\text{ML}}\frac{\partial{\mathbf{w}}}{\partial x} = \textbf{Q} $$ using trained neural network closure to complete last row of $ A^{\text{ML}}$, using RK3 and the non-conservative schemes described in \Cref{sec:Numerical Method}. The variables $\mathfrak{a},\mathfrak{b},\mathfrak{c}$ are the coefficients for the Runge-Kutta scheme.}}
\label{algo:numerical_experiment}
\end{algorithm}

\FloatBarrier

\subsection{Realization of HME Model using Neural Network}

\FloatBarrier

The neural network recovered the HME model as shown in \cref{fig:HME samples moments t=10}, for wave training data, and in \cref{fig:HME Mix samples moments t=10}, for mix training data, {  with the data generated as described in \cref{subsec:Training Considerations and Techniques}} . For wave trained HME closure, there was very good agreement during numerical prediction with wave initial conditions as seen in \cref{fig:HME samples moments t=10}. When mix initial conditions were used for prediction with a closure trained on wave data, the closure had difficulty recovering moments as seen in \cref{fig:HME smooth train/mix ic samples moments t=10}. However, when using mix initial conditions with a closure trained on 50\% wave, 50\% mix data, the HME model was recovered well, as seen in \cref{fig:HME Mix samples moments t=10}. {  The robust performance of the NN closure trained on HME data, as demonstrated in \cref{fig:HME samples moments t=10} and \cref{fig:HME Mix samples moments t=10}, is universal and representative of its effectiveness across various Knudsen number regimes.}

Recovery of the HME model by the trained network occurred over the range of Knudsen numbers tested for both smooth/wave and mix data as seen in Figures \ref{fig:HME l2 error vs kn t=10} and \ref{fig:HME mix abs. l2 error vs kn t=10}.
 
\subsection{Kinetic Trained Neural Network Closure Results}

\FloatBarrier

After training neural networks on moment data generated as described in 
\Cref{sec:Training Setup and Neural Network Architecture}, we then use this closure in the solver, using the FORCE method \cite{koellermeier2021high,castro2017well}, to predict moments using initial conditions that were not part of the training data set but generated in the same manner. The results are at $t=0.3$, which is beyond the training set time interval of $t=0.05$ to $t=0.15$. \Cref{fig:samples moments t=0.3} show individual samples of the first three moments corresponding to the primitive variables of density, velocity, and temperature across the range of Knudsen numbers at the various times. These compare the computed moments using the neural network closure and the moment data at matching times using the discrete velocity method in \Cref{sec:Training Setup and Neural Network Architecture}. 
{  In addition, to ensure the reproducibility of numerical experiments, in \Cref{tab:prediction sample parameters}, we present the parameters for smooth, wave kinetic initial conditions that are used in the numerical predictions shown in \Cref{fig:samples moments t=0.3}.}
\Cref{fig:l2 error vs kn t=0.3} show the relative $L^2$ error in the primitive variables at the computed times for all initial conditions in this test set.

Generally, we see that within the training set time interval we have good agreement between between neural network closure moments and this test set for the highly collisional range of Knudsen numbers, $[10^{-3},10^{-1})$. When computing with trained closures within the training time window, we have very good agreement across the range of Knudsen numbers, with relative $L^2$ error not exceeding 10\%. As we move beyond the training window, results in the transition and free streaming range of Knudsen numbers, $[10^{-1},10^1]$, vary greatly as seen in \cref{fig:l2 error vs kn t=0.3}. {  There also appears to be larger error in the macroscopic velocity, $u$, as well as clustering of moments of different order. This might be due to the parity of the moments, where $\rho$ and $\theta$ are even powered and $u$ is odd powered. This could be an avenue of future investigation.}

{  To demonstrate the advantage of our hyperbolic NN closure, we train a non-hyperbolic, kinetic NN closure that learns the gradient coefficients as in our previous work \cite{Huang2022-RTE1}. The prediction results with the Knudsen number $10^{-3}$ and $t=0.3$ are shown in \Cref{fig:non-hyperbolic vs hyperbolic}. We observe that the profiles of the lower-order moments $\rho$, $u$, and $\theta$ for the non-hyperbolic closure agree well with the kinetic data. At the same time, the oscillations appear in the higher-order moments $f_3$ and $f_4$. This is due to the loss of hyperbolicity in the ML model. As a comparison, the predictions of our hyperbolic NN closure align well with the kinetic data for all the moments. Additionally, it is worth noting that the oscillations increase over time and worsen with increasing Knudsen numbers, though these results are omitted here due to the page limit.
}

{  We also numerically test the Galilean invariance of our ML model. We train the neural network using the dataset with the macroscopic velocity of $u=0$. Then we use the ML model to predict the solution profiles with an initial condition with $u=0.1$. The profiles of density, velocity, and temperature at $t=0.3$ are presented in \Cref{fig:Galilean invariance example}. The results of the ML model agree well with those from the kinetic equation. This shows that our neural network generates closures which preserves Galilean invariance.}

\begin{figure}[ht]
    \centering
    \includegraphics[scale=0.40]{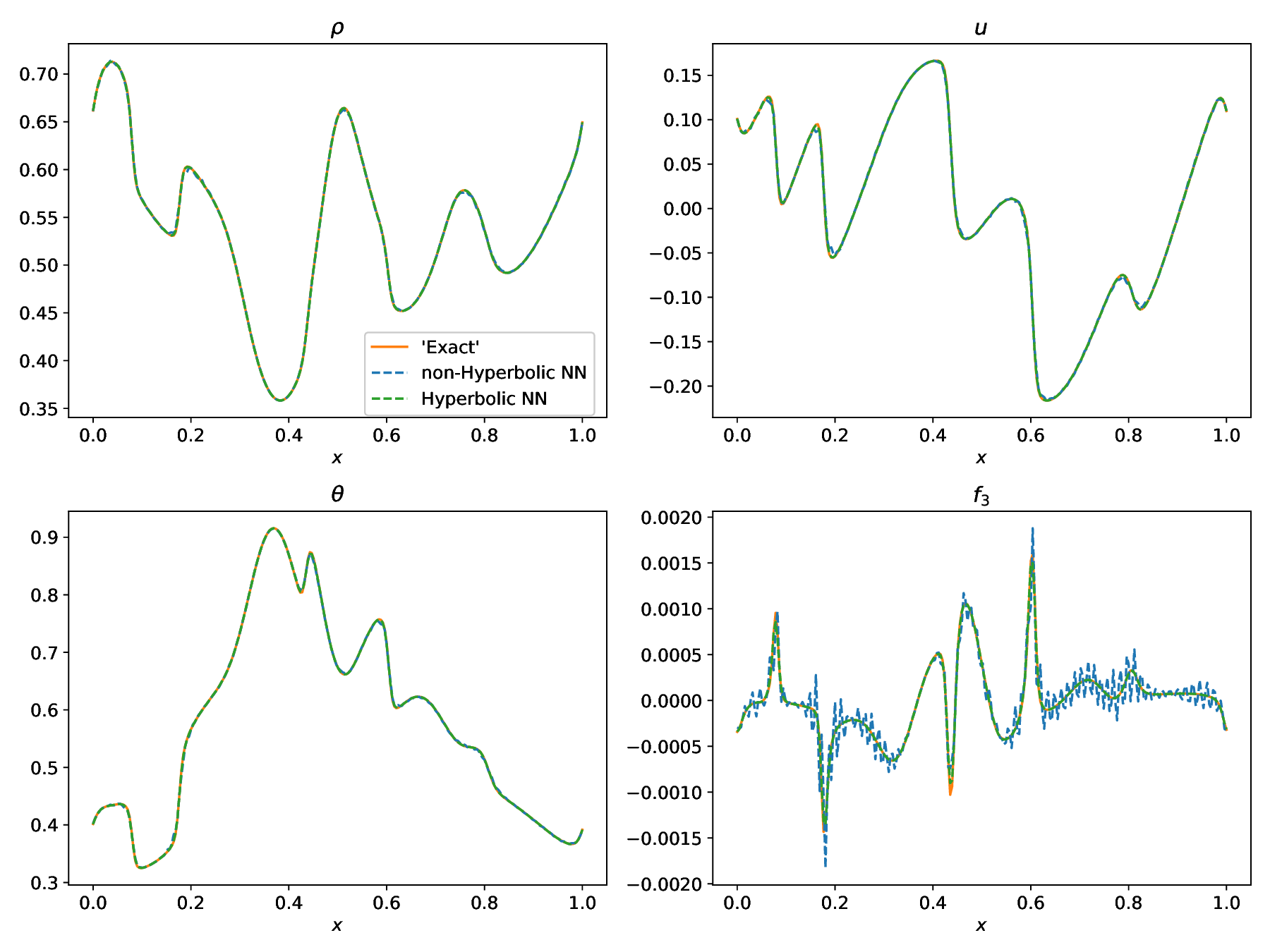}  
    \caption{{  Moments calculated using a kinetic trained NN without hyperbolic method and with the hyperbolic method. This example has a Knudsen number of 0.001. This prediction is at $t=0.3$. 
    }}
    \label{fig:non-hyperbolic vs hyperbolic}
\end{figure}

\begin{figure}[ht]
    \centering
    \includegraphics[scale=0.5]{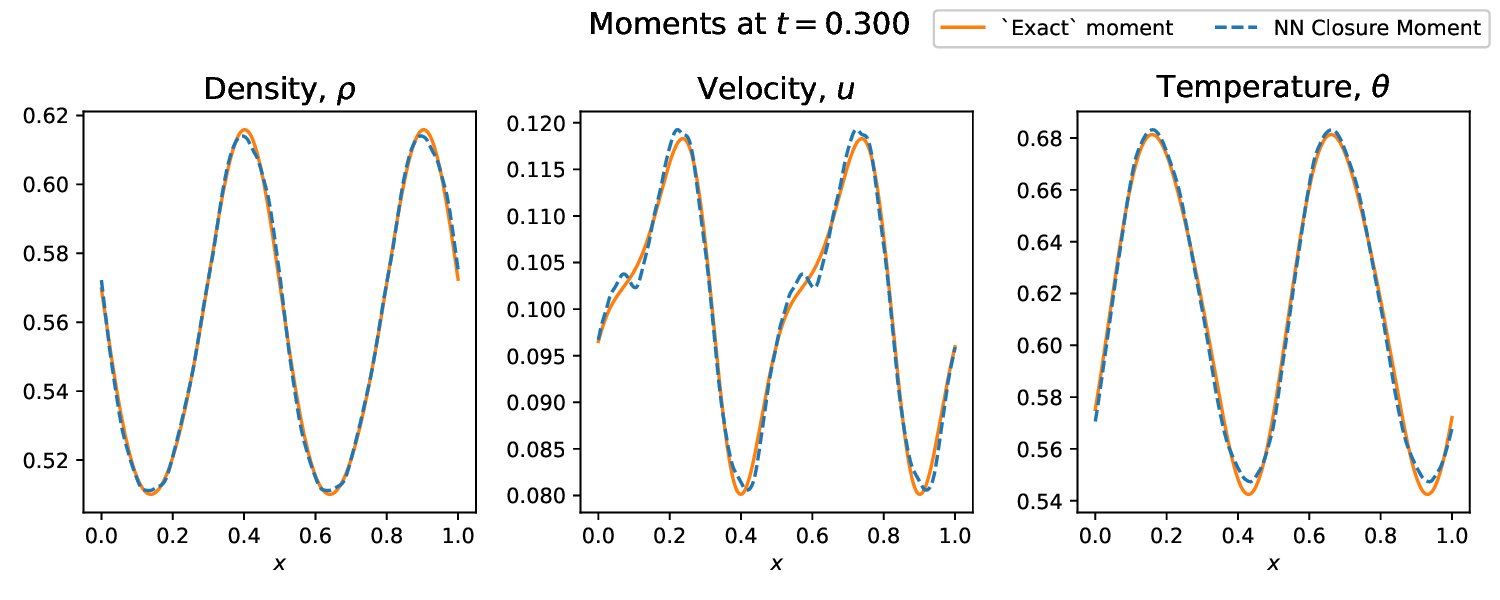}   
    \caption{{  Galilean invariance using a kinetic trained closure for the first three moments, $\rho,u,\theta$, for Knudsen number of 0.063. The NN closure was trained with a macroscopic velocity of $u=0$ and was used to predict for an initial condition with $u=0.1$. This prediction is at $t=0.3$. 
    }}
    \label{fig:Galilean invariance example}
\end{figure}

Predicting beyond the training set time interval with the kinetic trained closure, we see the highly collisional range of Knudsen numbers continue to have good agreement with the test set moments for smooth initial conditons. However, in the transition and free streaming Knudsen regimes we see wide variation in the quality of the solutions, with a max relative $L^2$ error of approximately $5.0$. But there are also solutions that are very well behaved and with relative $L^2$ error of approximately $0.02$. This can be seen in \cref{fig:l2 error vs kn t=0.3}. {  Preliminary investigation of the non-uniform performance at intermediate Knudsen number, i.e.\ $[0.1, 1.0]$, indicates evidence of over-fitting within the NN.} Adding dropout and increasing $L^2$-regularization with builtin PyTorch modules and optimizer parameters show some improvement in error and eliminated solver failures on some initial conditions. This will be explored in future work.

Investigating the computed kinetic trained moment closures in the Knudsen numbers above $10^{-1}$, we generally observe high frequency oscillations develop beyond the training set time interval. This was not observed in the HME NN closures nor in the earlier RTE work which both had good agreement over all collision regimes. These oscillations did not occur for all initial conditions. During our investigation we studied the maximum distance between eigenvalues computed from the NN closure, which hints at the potential source of the oscillations. Comparing cases with similar Knudsen numbers, we observe that well behaved predictions had well separated eigenvalues through time, while predictions that saw oscillations develop saw minimum separation between eigenvalues shrink significantly before causing instability.

{  We compare both the time required for calculation and the relative errors in the primitive variables for a kinetic trained NN closure with 8 total moments and the rational HME models with varying number of moments for a Knudsen number of 0.0466 in \Cref{tab:Model Compare}. The initial condition is sampled from smooth generated data. As a base for comparison, we use a DVM calculation as the reference for time and as the comparison for errors. The kinetic trained NN closure is competitive with HME in computational time while providing improved accuracy in the primitive variables. The accuracy of the solution in the primitive variables fell between 10 and 12 moment HME model and time to solution comparable to 8 and 10 moment HME model. This shows modest improvements for Knudsen value approaching the transition regime. Our future work will focus on improving accuracy in the transition and free streaming limit Knudsen regimes. This only accounts for the computation time provided a trained NN closure; the time to create training sets and train the associated NN are much longer.}

{  As a point of interest, we formulated and tested classic Riemann problems and compared with kinetic NN closures trained on both smooth/wave and 50-50 smooth and mixed data. For small Knudsen numbers, below 0.01, the ML model accurately captures the appropriate dynamics, including how the initial discontinuity spreads. Interestingly, the added mixed training data provided superior performance in resolving the higher moments. For the transition regime where the Knudsen number approaches 0.1, the model captures bulk dynamics but exhibits spurious oscillations. Generalizations that accurately capture the dynamics in the transition regime and beyond are subjects of our ongoing and future work.}

\begin{figure}[ht]
    \centering
    \includegraphics[trim={0 55 0 0},clip,scale=0.4]{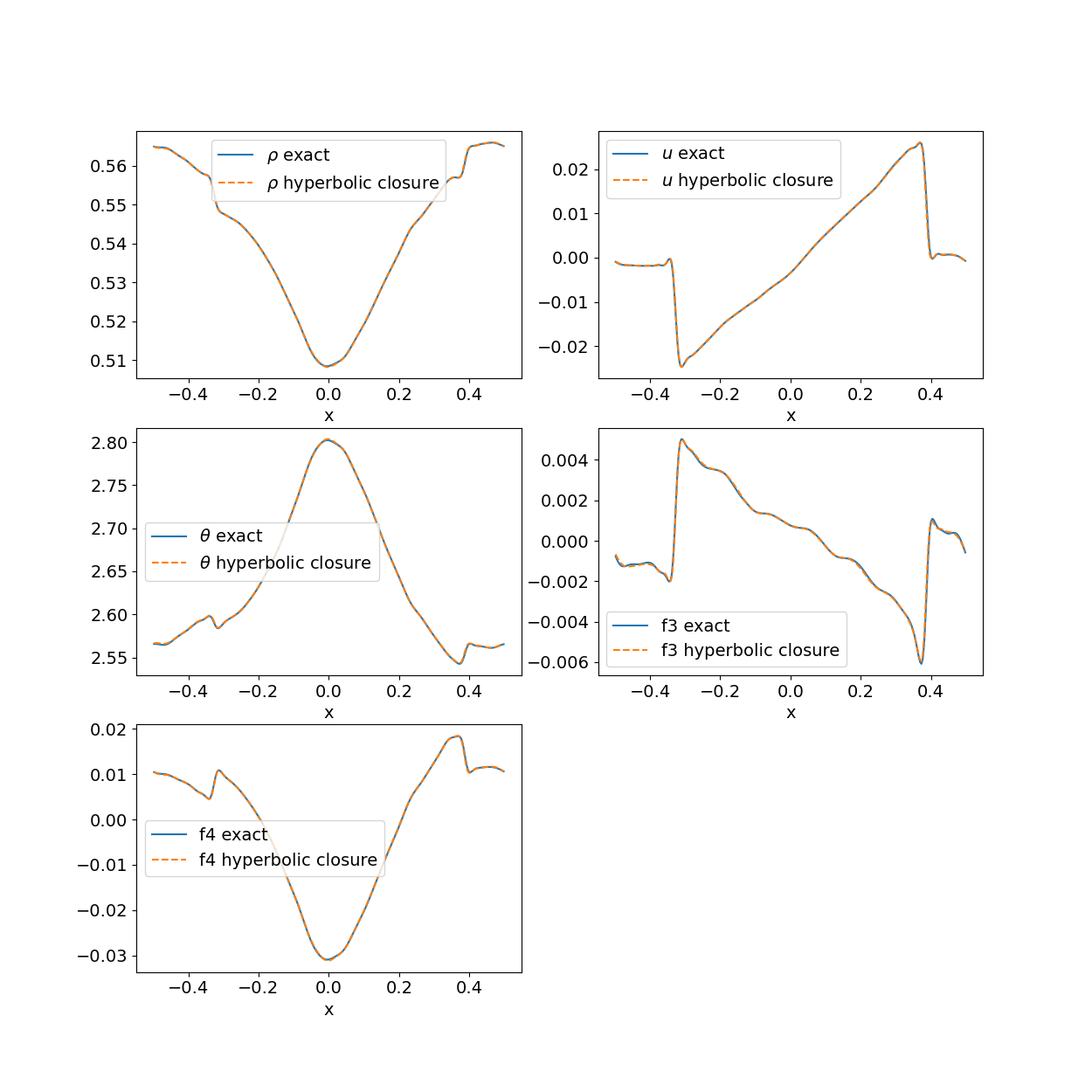}
    \caption{HME trained closure: Sample of five moments ($\rho, u, \theta, f_3, f_4$) for Knudsen number $\tau=1$ at $t=10$, beyond the training data time range. trained on smooth, wave data then used to predict moments on smooth initial conditions.}
    \label{fig:HME samples moments t=10}
\end{figure}

\begin{figure}
    \subfloat[\label{fig:HME smooth train/mix ic samples moments t=10}]{
    \includegraphics[trim={0 0 465 0},clip,width=0.45\textwidth,valign=c]{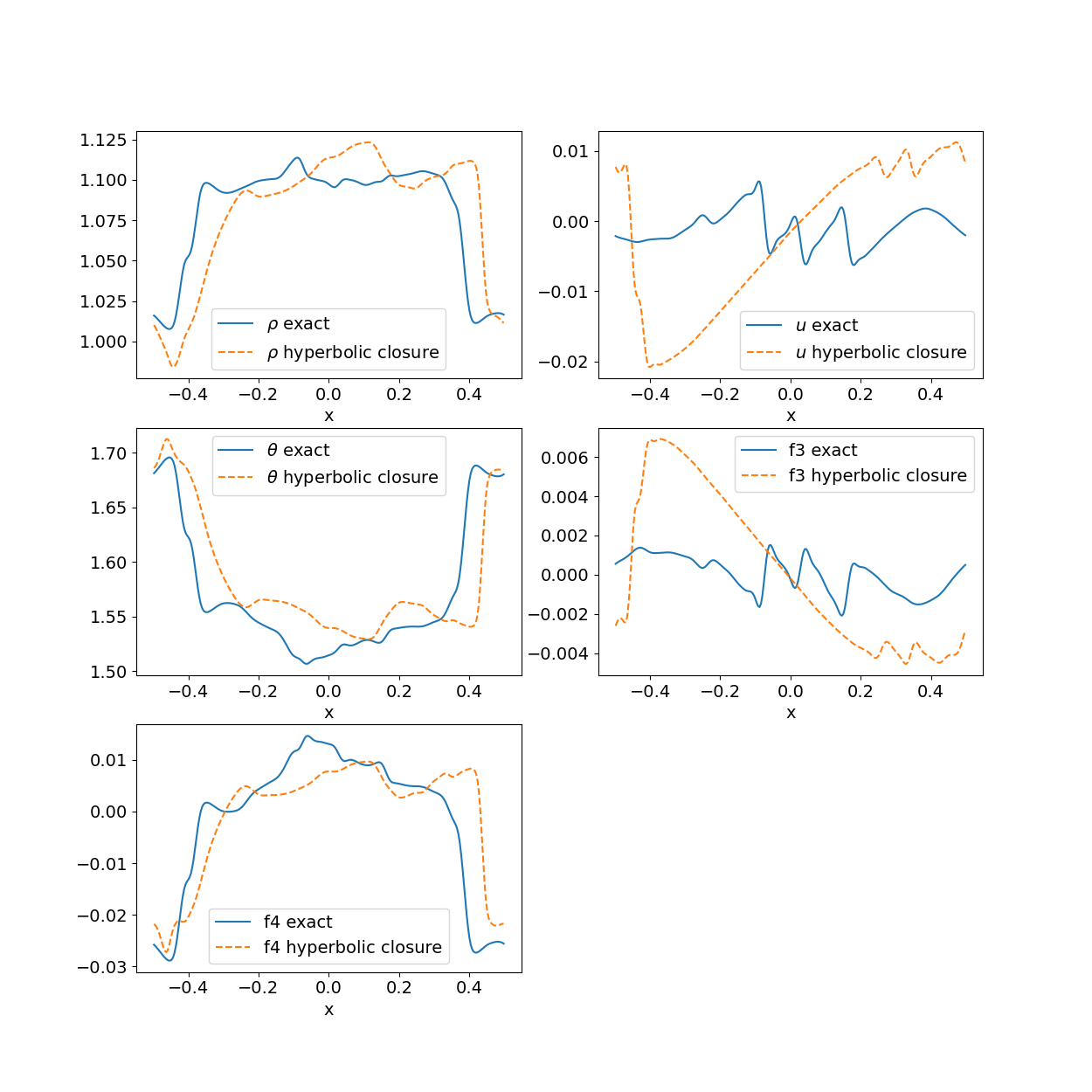}
    }
    \subfloat[\label{fig:HME Mix samples moments t=10}]{
    \includegraphics[trim={0 0 465 0},clip,width=0.45\textwidth,valign=c]{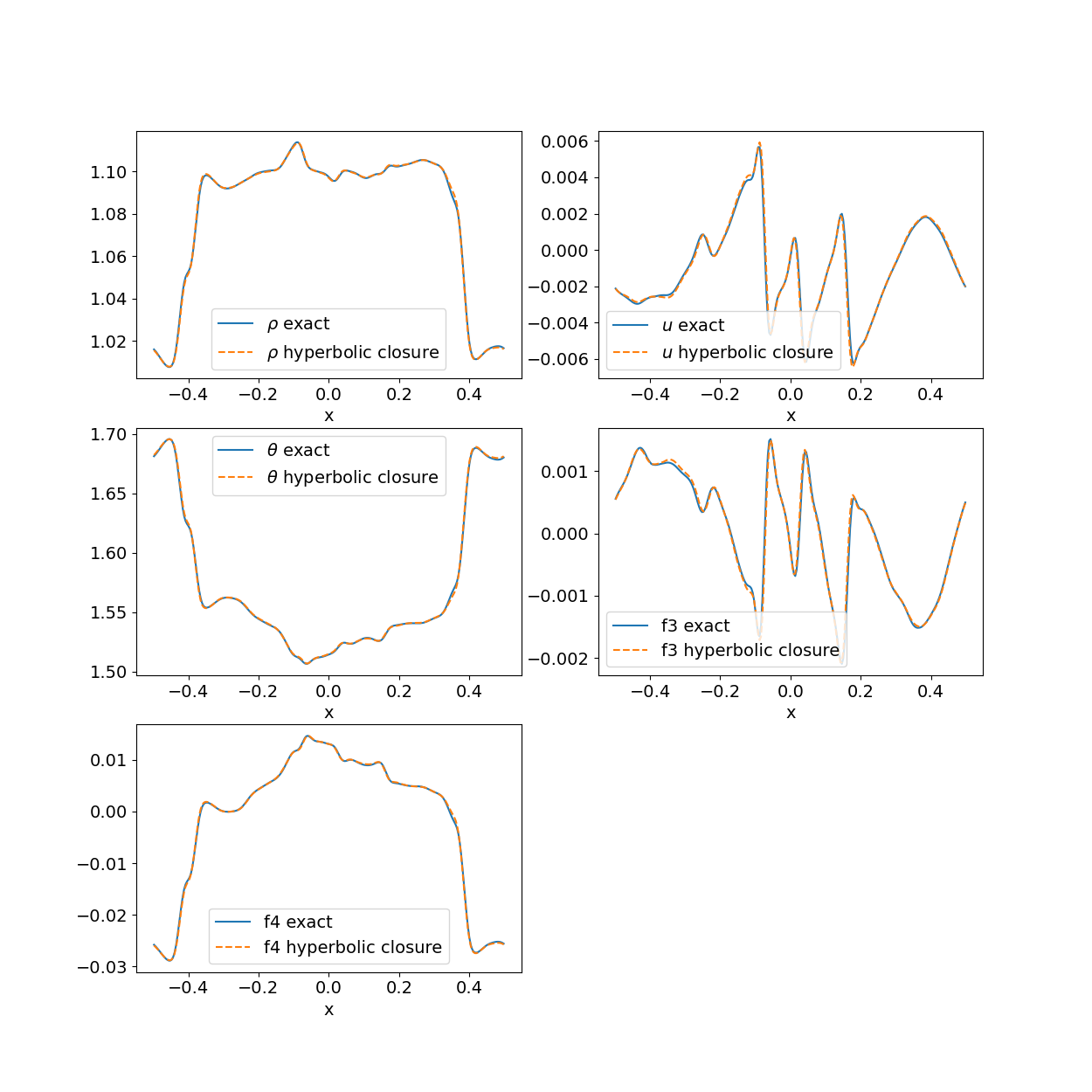}
    }
    \caption{HME trained closure: Sample from five moment model ($\rho,\theta, f_4$) for Knudsen number $\tau=1$ at time $t=10$, beyond the training data time range. \ref{fig:HME smooth train/mix ic samples moments t=10} HME closure trained on smooth, wave data then used to predict moments on mixed initial conditions. \ref{fig:HME Mix samples moments t=10} HME closure trained on a training set consisting of 50\% smooth data and 50\% mix data.}
\end{figure}

\begin{figure}
    \subfloat[\label{fig:HME l2 error vs kn t=10}]{
    \includegraphics[trim={0 0 45 40},clip,width=0.48\textwidth,valign=c]{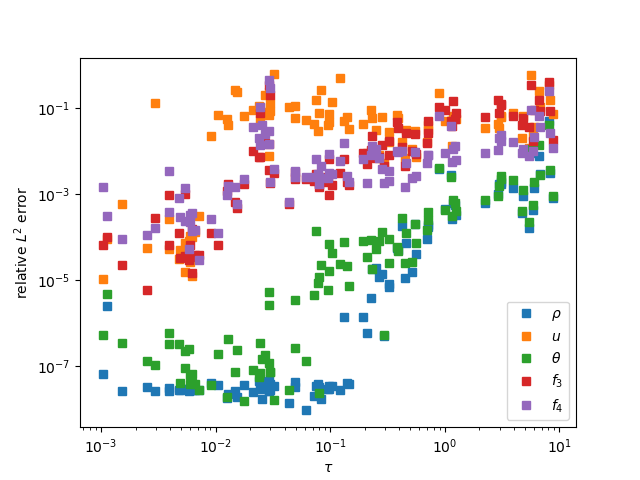}
    }
    \subfloat[\label{fig:HME mix abs. l2 error vs kn t=10}]{
    \includegraphics[trim={0 0 45 40},clip,width=0.48\textwidth,valign=c]{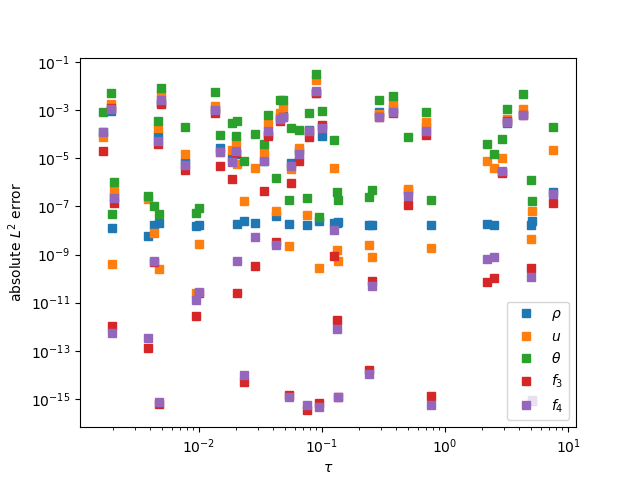}
    }
    \caption{HME trained closure: Errors for Knudsen numbers at $t=10$, beyond the training data time range. Left: relative $L^2$ error in $\rho, u, \theta, f_3, f_4$ at $t=10$, beyond the training data time range for HME trained neural network closure on smooth/wave data. Right: absolute $L^2$ error in $\rho, u, \theta, f_3, f_4$ at $t=10$, beyond the training data time range for HME trained neural network closure on mix data.}
\end{figure}

\FloatBarrier

\begin{figure}[ht]
    \includegraphics[trim={0 0 0 5},clip,scale=0.38]{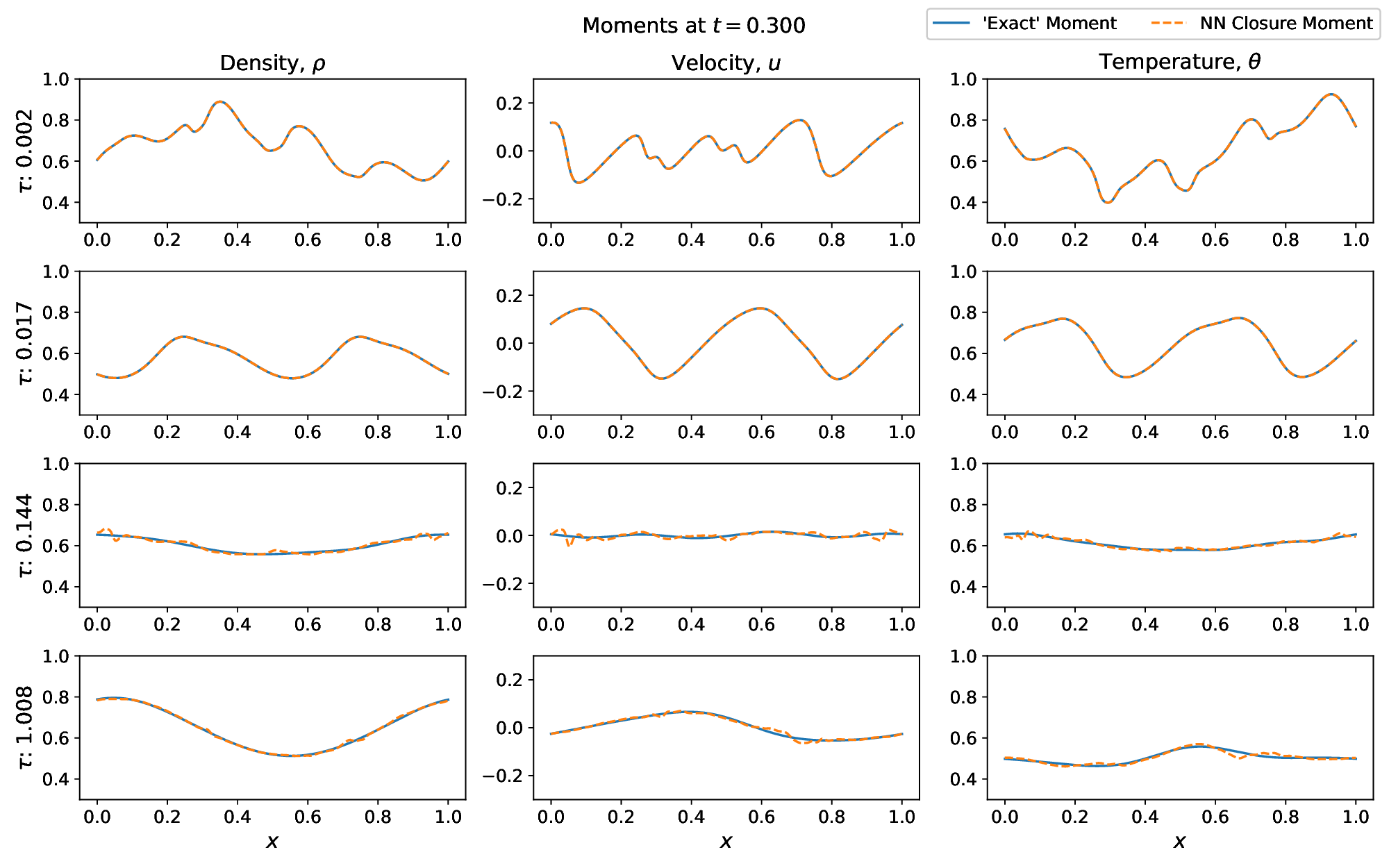}
    \caption{Kinetic trained closure: First three moments, $\rho, u, \theta$, for various Knudsen numbers at $t=0.3$, beyond the training data time range from neural network of 9 layers, 128 neuron wide, using ReLU with batch normalization for smooth initial conditions. Each column corresponds to a moment going left to right: $\rho, u, \theta$. Each row is a different Knudsen number, going top to bottom: 0.002, 0.017, 0.144, 1.008.}
    \label{fig:samples moments t=0.3}
\end{figure}

\begin{table}[ht]
\centering
\resizebox{\columnwidth}{!}{%
\begin{tabular}{|c|llll|llll|l|}
\hline
\multicolumn{1}{|l|}{\multirow{2}{*}{\begin{tabular}[c]{@{}l@{}}Knudsen \\ Number\end{tabular}}} &
  \multicolumn{4}{c|}{Density} &
  \multicolumn{4}{c|}{Temperature} &
  \multicolumn{1}{c|}{\multirow{2}{*}{\begin{tabular}[c]{@{}c@{}}Mixing\\ $\alpha$\end{tabular}}} \\ \cline{2-9}
\multicolumn{1}{|l|}{} &
  \multicolumn{1}{c|}{$a_\rho$} &
  \multicolumn{1}{c|}{$b_\rho$} &
  \multicolumn{1}{c|}{$k_\rho$} &
  \multicolumn{1}{c|}{$\phi_\rho$} &
  \multicolumn{1}{c|}{$a_\theta$} &
  \multicolumn{1}{c|}{$b_\theta$} &
  \multicolumn{1}{c|}{$k_\theta$} &
  \multicolumn{1}{c|}{$\phi_\theta$} &
  \multicolumn{1}{c|}{} \\ \hline
\multirow{2}{*}{0.001554} &
  \multicolumn{1}{l|}{0.261888} &
  \multicolumn{1}{l|}{0.270198} &
  \multicolumn{1}{l|}{4} &
  0.347195 &
  \multicolumn{1}{l|}{0.228545} &
  \multicolumn{1}{l|}{0.665582} &
  \multicolumn{1}{l|}{1} &
  2.708624 &
  0.438211 \\ \cline{2-10} 
 &
  \multicolumn{1}{l|}{0.270198} &
  \multicolumn{1}{l|}{0.627907} &
  \multicolumn{1}{l|}{1} &
  5.5624 &
  \multicolumn{1}{l|}{0.223145} &
  \multicolumn{1}{l|}{0.622083} &
  \multicolumn{1}{l|}{4} &
  1.573187 &
  0.596517 \\ \hline
\multirow{2}{*}{0.016515} &
  \multicolumn{1}{l|}{0.222878} &
  \multicolumn{1}{l|}{0.655877} &
  \multicolumn{1}{l|}{1} &
  0.484743 &
  \multicolumn{1}{l|}{0.213417} &
  \multicolumn{1}{l|}{0.574442} &
  \multicolumn{1}{l|}{3} &
  3.280153 &
  0.066066 \\ \cline{2-10} 
 &
  \multicolumn{1}{l|}{0.229181} &
  \multicolumn{1}{l|}{0.585005} &
  \multicolumn{1}{l|}{2} &
  4.482850 &
  \multicolumn{1}{l|}{0.253143} &
  \multicolumn{1}{l|}{0.639633} &
  \multicolumn{1}{l|}{2} &
  5.905965 &
  0.981557 \\ \hline
\multirow{2}{*}{0.143742} &
  \multicolumn{1}{l|}{0.246574} &
  \multicolumn{1}{l|}{0.574064} &
  \multicolumn{1}{l|}{3} &
  3.953562 &
  \multicolumn{1}{l|}{0.254433} &
  \multicolumn{1}{l|}{0.253392} &
  \multicolumn{1}{l|}{1} &
  4.545444 &
  0.589359 \\ \cline{2-10} 
 &
  \multicolumn{1}{l|}{0.250002} &
  \multicolumn{1}{l|}{0.686808} &
  \multicolumn{1}{l|}{4} &
  4.205230 &
  \multicolumn{1}{l|}{0.253392} &
  \multicolumn{1}{l|}{0.525255} &
  \multicolumn{1}{l|}{3} &
  4.694920 &
  0.384366 \\ \hline
\multirow{2}{*}{1.008160} &
  \multicolumn{1}{l|}{0.224907} &
  \multicolumn{1}{l|}{0.614791} &
  \multicolumn{1}{l|}{2} &
  5.343272 &
  \multicolumn{1}{l|}{0.251982} &
  \multicolumn{1}{l|}{0.569712} &
  \multicolumn{1}{l|}{2} &
  0.829738 &
  0.34663 \\ \cline{2-10} 
 &
  \multicolumn{1}{l|}{0.219768} &
  \multicolumn{1}{l|}{0.68036} &
  \multicolumn{1}{l|}{1} &
  1.102718 &
  \multicolumn{1}{l|}{0.250576} &
  \multicolumn{1}{l|}{0.514169} &
  \multicolumn{1}{l|}{1} &
  4.491626 &
  0.639318 \\ \hline
\end{tabular}%
}
\caption{{  Parameters for smooth, wave kinetic initial conditions, described in \Cref{subsec:training data preparation}, that are used in the numerical predictions shown in \Cref{fig:samples moments t=0.3}.}}
\label{tab:prediction sample parameters}
\end{table}

\begin{figure}[ht]
    \centering
    \includegraphics[trim={0 0 0 0},clip,scale=0.55]{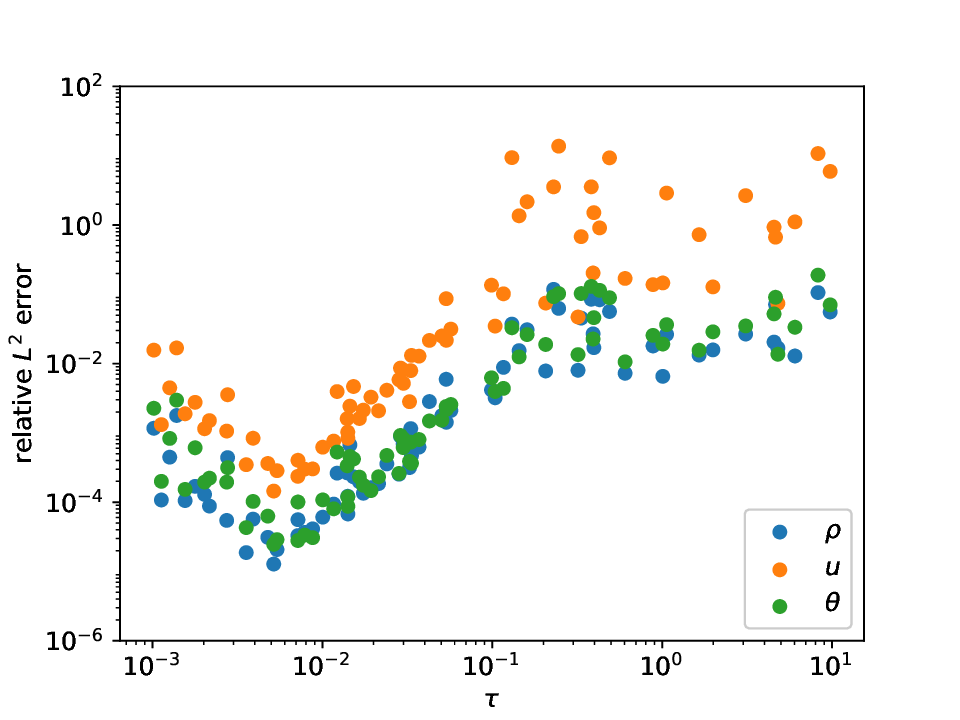}
    \caption{Kinetic trained closure: Relative $L^2$ error in $\rho, u, \theta$ at $t=0.3$, beyond the training data time range from neural network of 9 layers, 128 neuron wide, using ReLU with batch normalization for smooth initial conditions. Of the 100 initial conditions, 26 cases failed in the solver.}
    \label{fig:l2 error vs kn t=0.3}
\end{figure}

\begin{table}[ht]
\centering
\begin{tabular}{|c|c|c|c|c|c|c|}
\hline
Model                          & DVM    & HME-6  & HME-8 & HME-10 & HME-12 & NN-8 \\
\hline
Time (s)                       &  198   &  86    & 107   & 174   & 237    & 149  \\
\hline
Relative Time                  &  1.00  &  0.43  & 0.54  & 0.88  & 1.20   & 0.75  \\
\hline
Rel. $L^2$ Error: $\rho$  &  -     &  1.13\%  & 0.51\%  & 0.28\% & 0.15\% &  0.19\%  \\
\hline
Rel. $L^2$ Error: $u$     &  -     &  7.46\%  & 3.18\%  & 2.49\% & 1.94\% &  1.51\%  \\
\hline
Rel. $L^2$ Error: $\theta$ &  -    &  1.43\%  & 0.60\%  & 0.42\% & 0.26\% &  0.35\%  \\
\hline
\end{tabular}
\caption{{  Comparison of runtime and relative errors for HME models (no NN closure) with 6, 8, 10, and 12 moments and the kinetic trained NN closure with 8 total moments. Run-times and errors are computed against the DVM model. The HME model and the kinetic NN closure models were run in \texttt{python}, while the DVM model was run in \texttt{MATLAB}. Comparison made at $t=0.3$ for a case with Knudsen number of $0.0466$ for smooth data.}}
\label{tab:Model Compare}
\end{table}

\FloatBarrier

\section{Conclusion}
\label{sec:Conclusion}
We have developed and applied hyperbolicity preserving neural networks to the moment closure of the Grad expansion of the BGK model. We have demonstrated that when a local neural network moment closure is trained on data from the HME closure of the Grad expansion, the model robustly recovers HME both inside and outside of the training time window. Further, when the model is trained on moments of the full kinetic model, using DVM, the local neural network moment closure captures kinetic effects over a wider range of Knudsen numbers within the training window and beyond.  We also demonstrate the utility of introducing non-smooth data into the training processes for prediction beyond the training window.  

For kinetic BGK training data we have demonstrated neural network closures for smooth initial conditions over a range of Knudsen numbers. In the fluid regime, i.e.\ Knudsen numbers between $10^{-3}$ and $10^{-1}$, we have good agreement within and beyond the training time window. For transitional and free streaming regimes, i.e.\ Knudsen numbers between $10^{-1}$ and $10$, we demonstrate good agreement within the training time interval. Outside of the training time window, while relative error is small we see non-uniform performance in predictions. This may be due to our model not preserving the asymptotic limit of long time dynamics, which was preserved in the symmeterizer approach in earlier RTE work \cite{Huang-RTE2}.

Future work would be to improve and expand the kinetic BGK closures ability to generalize beyond training data set, especially in the in and above the transition regime. Possible approaches could include better NN regularization, greatly increasing the training set size with respect to Knudsen numbers, using transfer learning from HME NN closures to the kinetic closure, and implementing NN architecture that have regularization through low rank tensor decomposition. Another possibility is the development of a symmeterizer for the system as in the earlier RTE work \cite{Huang-RTE2}, but this would be non-trivial for the BGK model compared to RTE.

\section*{Acknowledgements}

{  The authors would like to thank the anonymous reviewers for their invaluable comments and questions which improved the quality of this paper.}
This work was supported in part by Michigan State University through computational resources provided by the Institute for Cyber-Enabled Research (ICER).
The authors would like to thank the various funding agencies for their support of this work.

\bibliographystyle{siamplain}
\bibliography{bibliography/reference}{}

\end{document}